\DeclareMathOperator{\N}{\mathbb{N}}
\def\d{ \,\mathrm{d}}
\def\x{{\bf{x}}}
 \def\n{{\bf{n}}}
 \def\nx{{\bf{n}\cdot{\bf{x}}}}
\def\k{{\bf{k}}}
\newtheorem{thm}{Theorem}
\newtheorem{lem}{Lemma}[section]
\newtheorem{cor}[thm]{Corollary}
\newtheorem{proposition}[lem]{Proposition}
\newtheorem*{rem*}{Remark}
\newtheorem*{thm*}{Theorem}
\title{\bf Non-Salem sets in multiplicative Diophantine approximation}
\author{Bo Tan and Qing-Long Zhou\footnote{Corresponding author.}
}
\date{}
\begin{document}


\maketitle

\begin{abstract}
In this paper, we answer a question of Cai-Hambrook in (arXiv$\colon$ 2403.19410).
Furthermore, we compute the   Fourier dimension of the multiplicative $\psi$-well approximable set
$$M_2^{\times}(\psi)=\left\{(x_1,x_2)\in [0,1]^{2}\colon \|qx_1\|\|qx_2\|<\psi(q) \text{ for infinitely many } q\in \N\right\},$$
where $\psi\colon\N\to [0,\frac{1}{4})$ is a positive function satisfying $\sum_q\psi(q)\log\frac{1}{\psi(q)}<\infty.$
As a  corollary, we show that the set $M_2^{\times}(q\mapsto q^{-\tau})$ is non-Salem for $\tau>1.$
\end{abstract}

\section{Introduction}
\subsection{Fourier dimension, Random Salem and non-{Salem} sets}
The regularity properties of a function/measure and the decay rate of its Fourier transform are tightly related.
The study of the optimal Fourier decay rate of measures supported on a fractal set $E\subseteq \mathbb{R}^{n}$ is
a central problem in analysis exploring the interplay between harmonic analysis and fractal geometry.

The optimal power-like decay of the Fourier transform is used to define the Fourier dimension of a  Borel set $E\subseteq \mathbb{R}^{n}$:
$$\dim_{\rm F}E=\sup\big\{s\in[0,n]\colon \exists \mu\in \mathcal{M}(E) \text{ such that } |\widehat{\mu}(\xi)|\ll_s(1+|\xi|)^{-s/2}\big\}.\footnote{Thoughout we use Vingradov notation: $A\ll B$ means $|A|\le C|B|$ for some constant $C>0$;  $A\asymp B$ means $A\ll B$ and $B\ll A$. }$$
Here  $\mathcal{M}(E)$ denotes the set of Borel probability measure on $\mathbb{R}^{n}$ that give full measure to $E.$
Fourier dimension is closely related to Hausdorff dimension. Indeed, Frostman's lemma \cite{M95,M15} states that the Hausdorff dimension of a Borel set $E$  is equal to
$$\dim_{\rm H}E=\sup\left\{s\in[0,n]\colon \exists \mu\in \mathcal{M}(E) \text { such that }\int |\widehat{\mu}(\xi)|^{2}|\xi|^{s-1}\d\mu<\infty\right\}.$$
Hence we obtain that
$$\dim_{\rm F}E\le \dim_{\rm H}E$$
for every Borel set $E\subseteq \mathbb{R}^{n}.$ In the case when the equality holds for a set $E,$
it is called a \emph{Salem set} or  \emph{round set} \cite{K93}. There is no lack of Salem sets; many random sets are Salem.
For example,  Salem \cite{S51} proved that for every $s\in[0,1]$ there exists a Salem set with dimension $s$ by constructing random Cantor-type sets in $\mathbb R$.
Kahane \cite{K66} showed that for every $s\in[0,n]$ there exists a Salem set in $\mathbb{R}^{n}$ with dimension $s$ by considering images of Brownian motion.
\L aba-Pramanik \cite{LP09} then applied these to the additive structure of Brownian images.
Later, Shieh-Xiao \cite{SX06} extended Kahane’s work to very general classes of Gaussian random fields.
For other random Salem sets the readers are referred to \cite{B96, CS17, E16, LP09, M18,SS18} and references therein.
On the other hand, some naturally defined random sets are not Salem. Fraser-Orponen-Sahlsten \cite{FOS14} showed that
the Fourier dimension of the graph of any function defined on $[0, 1]$ is at most 1, which in turn shows that graph of fractional Brownian motion is not Salem almost surely.
Fraser-Sahlsten \cite{FS18} further showed that the Fourier dimension of the graph of fractional Brownian motion is 1 almost surely.

In this papre, our motivation is to find more explicit Salem or non-Salem sets in the theory of metric Diophantine approximation.

\subsection{Metric Diophantine approximation}
Metric Diophantine approximation is concerned with the quantitative analysis of the density of rationals in the reals.

For an approximation function $\psi\colon \N \to [0,\frac{1}{2})$, the $\psi$-well approximable set $W(\psi)$ is defined to be
$$W(\psi):=\big\{x\in [0,1]\colon |\!|qx|\!|<\psi(q) \text{ for infinitely many } q\in \N \big\},$$
where $|\!|\alpha|\!|:=\min\{|\alpha-m|\colon m\in \mathbb{Z}\}$ denotes the distance from a real number $\alpha$ to the nearest integer.

The classical Khintchine’s theorem \cite{K24} states that, if $\psi$ is non-increasing, the Lebesgue measure $\mathcal{L}(W(\psi))=0$ or 1 according as the series $\sum_q\psi(q)$ converges or diverges.
Duffin-Schaeffer \cite{DS41} proved that Khintchine's theorem generally fails without the monotonicity condition on $\psi$. More precisely, they constructed a function $\psi$ which is supported on a set of very smooth integers (having a large number of small prime factors), such that $\sum_q\psi(q)$ diverges, but  $W(\psi)$ is null.   Further, Duffin-Schaeffer conjectured that for almost all $x\in[0,1]$ there are infinitely many coprime pairs $(p, q)$ such that $|qx-p|<\psi(q)$ if and only if $\sum_q\frac{\phi(q)}{q}\psi(q)$ diverges, where $\phi$ is the Euler's totient function. After important contributions of Gallagher \cite{G61}, Erd\"{o}s \cite{E70}, Vaaler \cite{V78}, Pollington-Vaughan \cite{PV90}, Beresnevich-Velani \cite{BV06}, the Duffin-Schaeffer conjecture was  solved affirmatively by Koukoulopoulos-Maynard \cite{KM20}.

Jarn\'{i}k Theorem \cite{J31} shows, under the monotonicity of $\psi,$  that
$$\dim_{\rm H}W(\psi)=\frac{2}{\tau+1}, \ \ \text{where } \tau=\liminf_{q\to\infty}\frac{-\log \psi(q)}{\log q}.$$
It is worth mentioning that Jarn\'{i}k Theorem can be deduced by Khintchine's Theorem via the mass transference principle of Beresnevich-Velani \cite{BV06}.
For a general function $\psi,$ the Hausdorff dimension of the $\psi$-well approximable set $W(\psi)$ was studied extensively in Hinokuma-Shiga \cite{HS96}.

We now turn to discuss the Fourier dimension of $W(\psi).$
For $\psi(q)=q^{-\tau}$, Kaufman \cite{K81} proved that the set $W(\psi)$ is of Fourier dimension $\frac{2}{\tau+1}$ for $\tau>1;$
this result is expounded in  Bluhm \cite{B98}.
Notably, this is the first explicit non-random construction of a Salem set of   dimension other than 0 or 1 in $\mathbb{R}$.
Moreover, Kaufman's result hinted an approach of find explicit non-random Salem sets in high dimension space 
 \cite{FH23,H17,H19}.
Recently, Cai-Hambrook  generalized Kaufman's result by considering the following set\footnote{Strictly speaking, Cai-Hambrook considered the mutli-dimensional generalization of $W(\psi,Q).$}
$$W(\psi,Q):=\big\{x\in [0,1): |\!|qx|\!|<\psi(q) \text{ for infinitely many } q\in Q \big\}.$$
\begin{thm}[Cai-Hambrook, \cite{CH24}]\label{CH24}
Let $Q$ be an infinite subset of $\mathbb{N}.$ Let $\psi\colon \N\to [0,\frac{1}{2})$ be an arbitrary function satisfying $\sum_{q\in Q}\psi(q)<\infty$. Then
$$\dim_{\rm F}W(\psi,Q)=\min\big\{2\lambda(\psi),1\big\},$$
where $\lambda(\psi)=\inf\left\{s\in[0,1]\colon \sum_{q\in Q}\left(\frac{\psi(q)}{q}\right)^{s}<\infty\right\}.$
\end{thm}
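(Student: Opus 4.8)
\emph{Proof proposal.} I would prove $\dim_{\rm F}W(\psi,Q)\le\min\{2\lambda(\psi),1\}$ and the reverse inequality separately. Identifying $[0,1)$ with the circle $\T$ (harmless, since $W(\psi,Q)\subseteq[0,1]$ and any $\mu\in\mathcal{M}(W(\psi,Q))$ pushes forward to $\T$), put $E_q=\{x\in\T:\|qx\|<\psi(q)\}$, a union of $q$ arcs of length $2\psi(q)/q$, so that $W(\psi,Q)=\limsup_{q\in Q}E_q$. The bound $\dim_{\rm F}W(\psi,Q)\le1$ is automatic on the line, and in both directions the exponent $2\lambda(\psi)$ is governed by whether the series $\sum_{q\in Q}(\psi(q)/q)^{s/2}$ converges.

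\emph{Upper bound.} Suppose $\mu\in\mathcal{M}(W(\psi,Q))$ satisfies $|\widehat{\mu}(\xi)|\ll_s(1+|\xi|)^{-s/2}$ with $s\in(0,1]$; I want $s\le2\lambda(\psi)$. Writing $\mathbf{1}_{E_q}(x)=g_q(qx)$ with $g_q$ the $1$-periodic function equal to $\mathbf{1}_{[-\psi(q),\psi(q)]}$ on $[-\tfrac12,\tfrac12)$, one has $\widehat{g_q}(0)=2\psi(q)$ and $|\widehat{g_q}(m)|\le\min\!\big(2\psi(q),(\pi|m|)^{-1}\big)$; expanding $\mathbf{1}_{E_q}$ in Fourier series and integrating against $\mu$ (justified since $\widehat{\mu}(\xi)\to0$ forces $\mu(\partial E_q)=0$) gives $\mu(E_q)=\sum_m\widehat{g_q}(m)\,\widehat{\mu}(-mq)$, whence
\[
\mu(E_q)\ \le\ 2\psi(q)+\sum_{m\ne0}|\widehat{g_q}(m)|\,|\widehat{\mu}(mq)|\ \ll\ \psi(q)+\sum_{m\ge1}\min\!\Big(\psi(q),\tfrac1m\Big)(mq)^{-s/2}\ \ll\ \psi(q)+\Big(\frac{\psi(q)}{q}\Big)^{s/2},
\]
the last step by splitting the sum at $m\asymp\psi(q)^{-1}$ and using $s<2$. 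Since $\mu(\limsup_{q\in Q}E_q)=1$, the convergence Borel--Cantelli lemma gives $\sum_{q\in Q}\mu(E_q)=\infty$; combined with $\sum_{q\in Q}\psi(q)<\infty$ this forces $\sum_{q\in Q}(\psi(q)/q)^{s/2}=\infty$, i.e.\ $s/2\le\lambda(\psi)$ by the definition of $\lambda(\psi)$. Taking the supremum over admissible $s$ proves $\dim_{\rm F}W(\psi,Q)\le\min\{2\lambda(\psi),1\}$.

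\emph{Lower bound.} Fix $s<\min\{2\lambda(\psi),1\}$ and put $t=s/2$, so $\sum_{q\in Q}(\psi(q)/q)^{t}=\infty$; it suffices to build $\mu\in\mathcal{M}(W(\psi,Q))$ with $|\widehat{\mu}(\xi)|\ll_\epsilon(1+|\xi|)^{-t+\epsilon}$ for each $\epsilon>0$, then let $s\uparrow\min\{2\lambda(\psi),1\}$. I would use a Kaufman-type multiscale construction (cf.\ \cite{K81,B98}). Split $Q$ into dyadic blocks $Q^{(j)}=\{q\in Q:2^{j-1}\le q<2^j\}$ and regroup them into consecutive super-blocks $B_1,B_2,\dots$ with $\sum_{q\in B_i}(\psi(q)/q)^{t}\asymp1$, which is possible exactly because the full series diverges; this regrouping is where the divergence gets spent. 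Along a very sparse subsequence of super-blocks, let $f_i$ be a lightly smoothed, suitably normalised version of $\sum_{q\in B_i}(\psi(q)/q)^{t}\,(2\psi(q))^{-1}\mathbf{1}_{E_q}$ (so $\int f_i\,\mathrm{d}x=1$, hence $\widehat{f_i}(0)=1$), and set $\mu=\lim_k\mu_k$ weak-$*$ with $\mathrm{d}\mu_k=c_k^{-1}f_1\cdots f_k\,\mathrm{d}x$. Choosing the super-blocks sparse enough that $\mu_{k-1}$ is essentially uniform on the scale where $f_k$ varies, one gets $c_k/c_{k-1}=\int f_k\,\mathrm{d}\mu_{k-1}\to1$ summably, so $\mu$ is a genuine probability measure; and since each $f_i$ is supported in a thin neighbourhood of $\bigcup_{q\in B_i}E_q$, $\supp\mu\subseteq\bigcap_N\bigcup_{k\ge N}\bigcup_{q\in B_{i_k}}E_q\subseteq W(\psi,Q)$. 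For the decay, expand $\widehat{\mu}(\xi)$ as a renormalised iterated convolution of the $\widehat{f_i}$: away from the origin $\widehat{f_i}$ is supported on the multiples of elements of $B_i$, where
\[
|\widehat{f_i}(\xi)|\ \ll\ \sum_{\substack{q\in B_i\\ q\mid\xi}}\Big(\frac{\psi(q)}{q}\Big)^{t}\min\!\Big(1,\big(|\xi|\,\psi(q)/q\big)^{-1}\Big)\ \ll\ d(\xi)\,|\xi|^{-t},
\]
the weights $(\psi(q)/q)^{t}$ being chosen precisely so that $(\psi(q)/q)^{t}\min(1,(|\xi|\psi(q)/q)^{-1})\le|\xi|^{-t}$ holds at every frequency and for every arc width, and $d(\xi)\ll_\epsilon|\xi|^{\epsilon}$ the divisor bound. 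Since distinct super-blocks occupy widely separated scales, only $|\xi|^{o(1)}$ of the $\widehat{f_i}$ contribute a nonzero frequency to a given $\xi$, and the smoothing makes the multi-factor cross terms of the convolution negligible; hence $\widehat{\mu}(\xi)$ is dominated by the single-factor terms and $|\widehat{\mu}(\xi)|\ll_\epsilon(1+|\xi|)^{-t+\epsilon}$.

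\emph{Main obstacle.} The crux is the uniform Fourier estimate in the lower bound, where several requirements must be met simultaneously: the smoothed supports of the $f_i$ have to descend into the genuine $\limsup$ set (most cleanly by running the construction with $\psi/2$, which has the same $\lambda$) while the normalisations $\int f_i\,\mathrm{d}\mu_{i-1}$ stay bounded away from $0$ and $\infty$ — which forces both the super-block regrouping (each increment carrying weight $\asymp1$, the role of the divergence of $\sum_{q\in Q}(\psi(q)/q)^{t}$) and the passage to a sufficiently sparse subsequence of super-blocks (so $\mu_{i-1}$ resolves the arcs of $f_i$ and the cross terms die); and the convolution estimate must be carried out at \emph{every} frequency, splitting into the ranges below, near, and above each resonant scale and controlling the per-block contribution by the weights and the divisor bound. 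Kaufman's original construction does this for $\psi(q)=q^{-\tau}$ and $Q=\mathbb{N}$, where the scales are self-similar and a single modulus per scale suffices; the substance of the theorem is that the argument still goes through for an arbitrary $\psi$ and an arbitrary $Q$, with nothing available beyond the divergence of $\sum_{q\in Q}(\psi(q)/q)^{s/2}$, which must therefore be spent efficiently over many moduli at each scale.
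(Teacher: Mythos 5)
The paper does not prove this statement at all --- it is quoted from Cai--Hambrook \cite{CH24} and used as a black box --- so there is no internal proof to compare against; your strategy is, however, exactly the one used in \cite{CH24} and the one this paper itself runs for the multiplicative analogue (Proposition \ref{up} is the two-dimensional version of your upper-bound argument, and Proposition \ref{low} likewise imports the Kaufman-type lower bound from \cite{CH24}). Your upper bound is correct and essentially complete: the expansion $\mu(E_q)=\sum_m\widehat{g_q}(m)\widehat{\mu}(-mq)$, the estimate $\mu(E_q)\ll\psi(q)+(\psi(q)/q)^{s/2}$ obtained by splitting at $m\asymp\psi(q)^{-1}$, and the first Borel--Cantelli step are all right, and the term-by-term integration is legitimate because the series is absolutely convergent once $s>0$ and because $\widehat{\mu}(\xi)\to0$ rules out atoms on the arc endpoints.

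The lower bound, by contrast, is a blueprint rather than a proof. You have correctly identified all the ingredients of the Kaufman--Bluhm--Hambrook construction: the super-block regrouping driven by the divergence of $\sum_{q\in Q}(\psi(q)/q)^{t}$, the weights $(\psi(q)/q)^{t}(2\psi(q))^{-1}$ calibrated so that each block satisfies $|\widehat{f_i}(\xi)|\ll d(\xi)|\xi|^{-t}$, the inner smoothing via $\psi/2$ so the support lands in the genuine $\limsup$ set, and the product measure $\mu=\lim c_k^{-1}f_1\cdots f_k\,\mathrm{d}x$. But the two assertions that carry all the difficulty --- that the super-blocks can be chosen sparse enough that $c_k/c_{k-1}=\int f_k\,\mathrm{d}\mu_{k-1}\to1$ summably, and that in the iterated convolution $\widehat{f_1}*\cdots*\widehat{f_k}$ the multi-factor cross terms are uniformly dominated by the single-factor terms at every frequency --- are stated, not established, and for an arbitrary $\psi$ and arbitrary $Q$ (where many moduli of comparable size coexist in one block) this is precisely where the work in \cite{CH24} lies. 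As a self-contained argument your proposal therefore proves only the inequality $\dim_{\rm F}W(\psi,Q)\le\min\{2\lambda(\psi),1\}$; as an account of how the full theorem is actually proved it is faithful, but the decisive estimates remain to be carried out.
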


 Furthermore, Cai-Hambrook  proposed the following question.
\smallskip

\noindent\textbf{Prove or disprove}$\colon$ If $\sum_{q\in Q}\psi(q)=\infty,$ then
$$\dim_{\rm F}W(\psi,Q)=1.$$

We provide a negative answer to this question.
\begin{thm}\label{TZ1}
There exists an approximation function $\psi$ satisfying
$\sum_{q\in \N}\psi(q)=\infty$ and
$$\dim_{\rm F}W(\psi,\N)=0.$$
\end{thm}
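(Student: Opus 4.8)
The plan is to construct $\psi$ supported on a set $Q$ of highly composite "smooth" integers, imitating the Duffin–Schaeffer counterexample, and then to show that this combinatorial sparsity forces $\dim_{\rm F} W(\psi,\mathbb{N}) = 0$ even though the series diverges. First I would recall that a lower bound on Fourier dimension comes only from measures on $W(\psi,\mathbb{N})$, so to prove $\dim_{\rm F} = 0$ it suffices to show that for every $s>0$, \emph{no} probability measure $\mu$ supported on $W(\psi,\mathbb{N})$ can satisfy $|\widehat\mu(\xi)| \ll (1+|\xi|)^{-s/2}$. The key observation is that $W(\psi,\mathbb{N}) = \limsup_q E_q$ with $E_q = \{x : \|qx\| < \psi(q)\}$, so $W(\psi,\mathbb{N}) \subseteq \bigcup_{q \ge N} E_q$ for every $N$; hence any $\mu \in \mathcal{M}(W(\psi,\mathbb{N}))$ is carried by the "tail union" $\bigcup_{q\ge N} E_q$, a set which — because $\psi$ is supported on sparse, structured $q$ — can be covered by very few, very structured arcs.

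Next I would exploit the structure quantitatively. The set $E_q$ is a union of $q$ arcs of length $2\psi(q)/q$ centred at rationals $p/q$. If $Q$ consists of integers divisible by a huge power of a fixed modulus (say $q \in Q \Rightarrow m \mid q$ for a large $m$), then all these rationals $p/q$ lie in a union of $q/m$ clusters, and more importantly the indicator functions $\mathbbm{1}_{E_q}$ have Fourier support concentrated on multiples of $q$, hence on multiples of $m$. I would pick a frequency $\xi$ adapted to this common modulus: testing the measure $\mu$ against a trigonometric polynomial built from frequencies in $m\mathbb{Z}$ that is large on $\bigcup_{q \ge N} E_q$ but whose frequencies are controlled, one gets a lower bound $|\widehat\mu(\xi)| \gg 1$ (or at least $\gg |\xi|^{-\varepsilon}$ for arbitrarily small $\varepsilon$) for infinitely many large $\xi$, contradicting polynomial decay of any positive order. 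Concretely, I expect to use a Fejér-type kernel $F_m$ concentrated near $\frac1m\mathbb{Z}$: since $\mu$ gives full mass to a set contained in a small neighbourhood of $\frac1m\mathbb{Z}$ (up to the sparse tail), $\int F_m \,d\mu$ is close to $1$, but $\int F_m\, d\mu = \sum_{|k|\le m} \widehat\mu(mk)$ (suitably weighted), forcing some $|\widehat\mu(mk)| \gg 1/m$ with $mk$ large — and by choosing the parameters in the construction of $\psi$ so that $m$ grows much slower than the scale at which $E_q$ becomes relevant, this beats $|\xi|^{-s/2}$ for every $s>0$.

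The remaining point is the bookkeeping that makes $\sum_q \psi(q) = \infty$ compatible with all of the above: here I would follow Duffin–Schaeffer, partitioning $\mathbb{N}$ into blocks, on the $j$-th block letting $Q$ consist of multiples of $m_j!$ (or of the product of the first $j$ primes) inside a dyadic range, and setting $\psi(q)$ constant on that block with the constant chosen so the block contributes, say, $1$ to $\sum_q\psi(q)$; summing over $j$ gives divergence. One checks that $\psi(q) < \frac14$ throughout by taking the ranges long enough, and that the moduli $m_j \to \infty$, which is exactly what drives $\dim_{\rm F}$ to $0$. The main obstacle, I expect, is the second step: turning "the measure lives on a set covered by arcs centred near $\frac1m\mathbb{Z}$" into a genuine lower bound on a specific Fourier coefficient, because $W(\psi,\mathbb{N})$ is a $\limsup$ set and a priori $\mu$ need not concentrate near $\frac1{m_j}\mathbb{Z}$ at \emph{every} scale $j$ — only for those $j$ where the tail $\bigcup_{q \ge N}E_q$ is still "block-$j$-dominated". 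Handling this requires a pigeonhole argument over blocks: since the blocks are disjoint in the relevant ranges and $\mu$ has total mass $1$, infinitely many blocks must each carry a definite proportion of $\mu$ in the sense needed, and on each such block the Fejér test applies. I would carry out this pigeonhole carefully, as it is the crux of the contradiction.
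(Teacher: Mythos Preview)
Your approach has a genuine gap at the geometric step. You take $Q$ to consist of \emph{multiples} of a large modulus $m$ and set $\psi$ constant on each block; then you claim that $\mu$, being supported in $\bigcup_{q\ge N} E_q$, is concentrated near $\tfrac{1}{m}\Z$. This is false: for $m\mid q$ the set $E_q=\{x:\|qx\|<\psi(q)\}$ is a union of $q$ arcs equally spaced at distance $1/q$, not $m$ arcs near $\tfrac1m\Z$. The correct (dual) statement is only that the Fourier support of $\mathcal{X}_{E_q}$ lies in $q\Z\subset m\Z$; but that does not translate into spatial concentration of $\mu$ near $\tfrac1m\Z$, so the Fej\'er test you describe cannot get started. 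The ``clusters'' you mention simply do not exist for this construction, and the pigeonhole over blocks does not repair this.

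The paper avoids the issue by reversing the arithmetic and adjusting $\psi$. It supports $\psi_k$ on the \emph{divisors} of a single square-free integer $N^{(k)}$ (a product of many primes, all larger than $N^{(k-1)}$), and crucially sets $\psi_k(q)=q/(2^kN^{(k)})$ rather than a constant. Then for $q\mid N^{(k)}$ one has the nesting
\[
\|qx\|<\psi_k(q)\ \Longrightarrow\ \|N^{(k)}x\|\le \tfrac{N^{(k)}}{q}\,\|qx\|<2^{-k},
\]
so every $E_q$ in level $k$ is contained in the single set $\{x:\|N^{(k)}x\|<2^{-k}\}$, which \emph{is} a thin neighbourhood of $\tfrac{1}{N^{(k)}}\Z$. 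This gives $W(\psi,\N)\subset\{x:\|N^{(k)}x\|<2^{-k}\text{ i.o.}\}$, and now Theorem~\ref{CH24} applies directly (since $\sum_k 2^{-k}<\infty$ and $\sum_k(2^{-k}/N^{(k)})^s<\infty$ for every $s>0$) to give Fourier dimension zero. Divergence of $\sum_q\psi(q)$ comes from choosing enough primes at each level so that $\sum_{p\in\mathcal P_k}1/p>2^k$. Your instinct that a Duffin--Schaeffer-type smooth-support construction is the right idea is correct; what you are missing is the divisor (not multiple) structure and the scaling $\psi(q)\propto q$, which together produce the containment that replaces your Fej\'er/pigeonhole machinery by a one-line reduction.
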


\subsection{Multiplicative Diophantine approximation}
 The study of Multiplicative Diophantine approximation is motivated by \textbf{Littlewood conjecture} \cite{BRV16}$\colon$ for any pair $(x_1,x_2)\in[0,1]^{2},$
$$\liminf_{q\to\infty}q\|qx_1\|\|qx_2\|=0.$$
Littlewood's conjecture has attracted much attention, see \cite{BV11, CT24, CT24+,CY24,EKL06,PV00,Y23} and references within.
Despite some   remarkable progress,   Littlewood conjecture remains very much open.
Along the way, there have been significant advances towards the corresponding metric theory.
The first systematic result\footnote{The convergence part was already known  (\cite{BV15}, Remark 1.2). For this reason, Gallagher’s theorem sometimes refers to the divergence part alone.} in this direction is a famous theorem of Gallagher \cite{G61}. Given $\psi\colon \N\to [0,\frac{1}{4}),$ let
$$M_2^{\times}(\psi)=\left\{(x_1,x_2)\in [0,1]^{2}\colon \|qx_1\|\|qx_2\|<\psi(q) \text{ for infinitely many } q\in \N\right\}$$
denote the set of multiplicative $\psi$-well approximable points $(x_1,x_2)\in [0,1]^{2}.$ Assuming the monotonicity of $\psi,$
Gallagher's Theorem asserts that the Lebesgue measure of $M_2^{\times}(\psi)$ is either 0 or 1 according as the series $\sum_q\psi(q)\log\frac{1}{\psi(q)}$ converges or diverges. Without assuming the monotonicity, Beresnevich-Haynes-Velani \cite{BHV13} showed a dichotomy for the Lebesgue measure of $M_2^{\times}(\psi)$ under some additional  assumptions. Removing the additional conditions, Fr\"{u}hwirth-Hauke \cite{FH24} proved the following result$\colon$    for almost all $(x_1,x_2)\in[0,1]^{2},$ there
exist infinitely many $q$ such that $\prod_{i=1}^{2}|qx-p_i|<\psi(q)$ with $p_1, p_2$ both coprime to $q$,
if and only if the series $\sum_q\frac{\phi(q)\psi(q)}{q}\log\big(\frac{q}{\phi(q)\psi(q)})$ diverges.

Hussain-Simmons \cite{HS18} proved that, if $\psi$ tends monotonically to $0$ as $q\to\infty$,
$$\dim_{\rm H}M_2^{\times}(\psi)= 1+\min\{d(\psi), 1\},$$
where $d(\psi)=\inf\big\{s\in[0,1]\colon \sum_{q=1}^{\infty}q\big(\frac{\psi(q)}{q}\big)^{s}<\infty \big\}.$ Combining the product formula of Hausdorff dimension with the Hausdorff measure version of the Duffin-Schaeffer  due to Beresnevich-Velani \cite{BV06},
Fr\"{u}hwirth-Hauke proved that the Hausdorff dimension   remains unchanged even upon removing the monotonicity of the approximation function.
\begin{thm}[Fr\"{u}hwirth-Hauke, \cite{FH24}]\label{FH24-1}
Let $\psi\colon \N\to [0,\frac{1}{4})$ be an arbitrary function. Then
\begin{equation*}
\dim_{\rm H}M_2^{\times}(\psi) = 1+\min\{d(\psi), 1\}.
  \end{equation*}
\end{thm}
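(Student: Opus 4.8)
The plan is to prove the two matching bounds $\dim_{\rm H}M_2^{\times}(\psi)\le 1+\min\{d(\psi),1\}$ and $\dim_{\rm H}M_2^{\times}(\psi)\ge 1+\min\{d(\psi),1\}$ separately. The upper bound is a covering (convergence Borel--Cantelli) argument; the lower bound is where the absence of monotonicity must be routed around, and it is the part that rests on the deep inputs.

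For the upper bound, write $M_2^{\times}(\psi)=\limsup_{q\to\infty}E_q$ with $E_q=\{(x_1,x_2)\in[0,1]^2:\|qx_1\|\|qx_2\|<\psi(q)\}$. The set $E_q$ is the union, over the $\asymp q^2$ rationals $(p_1/q,p_2/q)\in[0,1]^2$, of the hyperbolic neighbourhoods $\{|x_1-p_1/q|\,|x_2-p_2/q|<\psi(q)/q^2\}$ intersected with the square of side $1/q$ centred at $(p_1/q,p_2/q)$. A dyadic decomposition of the model region $\{|y_1|\,|y_2|<\delta\}\cap[-\tfrac12,\tfrac12]^2$ into the slabs $\{2^{-j-1}\le|y_1|\le 2^{-j}\}$, each covered by $\asymp 2^{-2j}/\delta$ squares of side $\asymp\delta 2^{j}$, shows that its $s$-dimensional Hausdorff content is $\ll\delta^{s-1}$ for every fixed $s\in(1,2)$ (the exponent $s-2<0$ makes the resulting geometric series converge; at $s=2$ it diverges logarithmically, which is exactly the origin of the factor $\psi(q)\log\tfrac1{\psi(q)}$ in Gallagher's theorem). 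Rescaling by $1/q$ and multiplying by the number of rational points gives $\mathcal H^s_\infty(E_q)\ll q^{2}\cdot q^{-s}\psi(q)^{s-1}=q\big(\psi(q)/q\big)^{s-1}$. Hence $\sum_q\mathcal H^s_\infty(E_q)<\infty$ whenever $s-1>d(\psi)$, i.e.\ $s>1+d(\psi)$, and by the Hausdorff--Cantelli lemma $\mathcal H^s\big(M_2^{\times}(\psi)\big)=0$ for all such $s$. Combined with the trivial bound $\dim_{\rm H}M_2^{\times}(\psi)\le 2$ this gives $\dim_{\rm H}M_2^{\times}(\psi)\le 1+\min\{d(\psi),1\}$.

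For the lower bound I would exploit the product structure. Since $\psi(q)<\tfrac14$, the function $2\psi$ still takes values in $[0,\tfrac12)$, and if $x_1\in W(2\psi)$ then along the infinitely many $q$ with $\|qx_1\|<2\psi(q)$ one has $\|qx_1\|\|qx_2\|<2\psi(q)\cdot\tfrac12=\psi(q)$ for \emph{every} $x_2$; thus
$$M_2^{\times}(\psi)\ \supseteq\ W(2\psi)\times[0,1].$$
Using the identity $\dim_{\rm H}(A\times[0,1])=\dim_{\rm H}A+1$ (valid for all $A\subseteq\R$ since $[0,1]$ has equal Hausdorff and box dimension) together with $\dim_{\rm H}W(2\psi)=\dim_{\rm H}W(\psi)$ (the defining series for $d$ is insensitive to the constant), it suffices to show $\dim_{\rm H}W(\psi)=\min\{d(\psi),1\}$ for an \emph{arbitrary} approximation function. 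The $\le$ direction is the one-dimensional version of the covering above. For $\ge$, fix $s<\min\{d(\psi),1\}$: then $\sum_q q\big(\psi(q)/q\big)^{s}=\infty$, and by the mass transference principle of Beresnevich--Velani, $\mathcal H^s\big(W(\psi)\big)>0$ provided the auxiliary limsup set $\limsup_q\bigcup_{p}B\!\big(p/q,(\psi(q)/q)^{s}\big)$ has full Lebesgue measure, which by the Duffin--Schaeffer theorem of Koukoulopoulos--Maynard holds as soon as $\sum_q\tfrac{\phi(q)}{q}\,q\big(\psi(q)/q\big)^{s}=\infty$. Here the coprimality weight $\phi(q)/q$ costs only a factor $\gg 1/\log\log q$, and for $s$ \emph{strictly} below the critical exponent $d(\psi)$ the individual terms $q^{1-s}\psi(q)^s$ already outgrow any loss concentrated on highly composite $q$, so the weighted series still diverges and $\dim_{\rm H}W(\psi)\ge s$. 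Letting $s\uparrow\min\{d(\psi),1\}$ finishes the lower bound.

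The main obstacle, as the above makes clear, is not any single estimate but the need in the lower bound to replace the classical Jarník machinery — which requires monotonicity — by the combination of the mass transference principle and the full-strength Duffin--Schaeffer theorem, and then to check that this combination still returns exactly the exponent $\min\{d(\psi),1\}$ for genuinely arbitrary $\psi$; the delicate point there is that the coprimality weight $\phi(q)/q$ is harmless for the Hausdorff \emph{dimension} (one always works below the critical exponent) even though it would matter for the Hausdorff \emph{measure} at the critical exponent. On the upper-bound side the only non-routine step is getting the hyperbolic-neighbourhood content estimate $\mathcal H^s_\infty(E_q)\ll q(\psi(q)/q)^{s-1}$ uniformly for $s\in(1,2)$, where one must handle the geometry of $\{|y_1||y_2|<\delta\}$ carefully near both the coordinate axes and the diagonal.
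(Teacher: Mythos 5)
This theorem is quoted from Fr\"uhwirth--Hauke \cite{FH24}; the paper gives no proof of its own, and your sketch follows precisely the route the paper attributes to them — a Hausdorff--Cantelli covering upper bound using the dyadic decomposition of the hyperbolic neighbourhoods, and a lower bound via the inclusion $M_2^{\times}(\psi)\supseteq W(2\psi)\times[0,1]$, the product formula for Hausdorff dimension, and the Hausdorff-measure Duffin--Schaeffer theorem (mass transference plus Koukoulopoulos--Maynard). The one point you flag as delicate, the weight $\phi(q)/q$, is handled exactly as you indicate: for $s<s'<d(\psi)$ one has $q^{1-s}\psi(q)^{s}\ge q^{s'-s}\,q^{1-s'}\psi(q)^{s'}$ because $\psi(q)\le 1$, and the power gain $q^{s'-s}$ absorbs the $1/\log\log q$ loss, so the weighted series still diverges.
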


In view of the study of the Fourier dimension of   $W(\psi),$
the following question arises naturally: whether is $M_2^ {\times}(\psi)$  (non-)Salem?  or, what is the Fourier dimension of $M_2^{\times}(\psi)$?
Notably, Fourier dimension of $M_2^{\times}(\psi)$ is trickier to deal with than its Hausdorff dimension. In general,  the produce formula
$$\dim_{\rm F}(\mu\times\nu)\ge \dim_{\rm F}\mu+\dim_{\rm F}\nu$$
 is not true 
 since
$$\dim_{\rm F}(\mu\times\nu)=\min\{\dim_{\rm F}\mu, \dim_{\rm F}\nu\},$$
unless $\dim_{\rm F}\mu=\dim_{\rm F}\nu=0.$  See \cite{F24,F24+} for more details about the Fourier decay of product measures.
Moreover, it is difficult to check whether or not   $\mu\in \mathcal{M}(M_2^{\times}(\psi))$ satisfies a desired power-like decay. 

As in the linear case, we introduce the following set: Let $Q$ be an infinite subset of $\mathbb{N},$  and define
$$M_2^{\times}(\psi,Q):=\left\{(x_1,x_2)\in [0,1]^{2}\colon \|qx_1\|\|qx_2\|<\psi(q) \text{ for infinitely many } q\in Q\right\}.$$
Specially, $M_2^{\times}(\psi)=M_2^{\times}(\psi,\mathbb{N}).$ 
We obtain the Fourier dimension of this set.
\begin{thm}\label{TZ24}
 Let $\psi\colon \N\to [0,\frac{1}{4})$ be an arbitrary function satisfying $\sum_{q\in Q}\psi(q)\log\frac{1}{\psi(q)}$ converges. Then
 \begin{equation*}
 \dim_{\rm F}M_2^{\times}(\psi,Q)
       =2\tau(\psi,Q),     \end{equation*}
where $\tau(\psi,Q)=\inf\big\{s\in[0,1]\colon \sum_{q\in Q}q^{-s}(\psi(q))^{\frac{s}{2}}<\infty\big\}.$
\end{thm}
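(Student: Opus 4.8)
**The plan is to establish matching upper and lower bounds for $\dim_{\rm F} M_2^\times(\psi,Q)$, treating the two inequalities by quite different techniques.**

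For the \textbf{upper bound} $\dim_{\rm F} M_2^\times(\psi,Q)\le 2\tau(\psi,Q)$, I would argue that any $s>2\tau(\psi,Q)$ cannot be a Fourier exponent. The natural route is via the Fourier--dimension/energy comparison: if $\mu\in\mathcal M(M_2^\times(\psi,Q))$ has $|\widehat\mu(\xi)|\ll(1+|\xi|)^{-s/2}$, then $\mu$ has finite $s$-energy, hence $M_2^\times(\psi,Q)$ carries positive $s$-dimensional Hausdorff content along a suitable sequence of scales. The key point is that $M_2^\times(\psi,Q)$ is contained, for every $N$, in the lim-sup set $\bigcup_{q\in Q, q\ge N}\{(x_1,x_2): \|qx_1\|\|qx_2\|<\psi(q)\}$, and each such ``hyperbola neighbourhood'' around $a/q,b/q$ can be efficiently covered: a region $\|qx_1\|<t$, $\|qx_2\|<\psi(q)/t$ decomposes, summing over the dyadic ranges of $t$ and over $0\le a,b<q$, into boxes whose $s$-dimensional content is controlled by $q^{-s}\psi(q)^{s/2}\log\frac1{\psi(q)}$ (the logarithm being exactly the Gallagher-type loss from summing $\sum_{t\text{ dyadic}} t^{s}(\psi(q)/t)^{s/2}\cdot$(number of boxes)). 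Since for $s>\tau(\psi,Q)$ the series $\sum_q q^{-s}\psi(q)^{s/2}$ converges and the extra log is absorbed by slightly enlarging $s$, a Borel--Cantelli argument forces the $s$-content to vanish, contradicting positive $s$-energy. This gives $\dim_{\rm F}\le 2\tau$ — note $\tau(\psi,Q)\le 1$ so the truncation at $1$ in the definition is harmless.

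For the \textbf{lower bound} $\dim_{\rm F} M_2^\times(\psi,Q)\ge 2\tau(\psi,Q)$, I would construct, for each $s<\tau(\psi,Q)$, a measure $\mu\in\mathcal M(M_2^\times(\psi,Q))$ with $|\widehat\mu(\xi)|\ll_s (1+|\xi|)^{-s}$ (note the exponent, giving Fourier dimension $2s$). This is where Kaufman's method enters, adapted to the multiplicative/two-dimensional setting in the spirit of \cite{CH24}. The idea is to build $\mu$ as a weak-$*$ limit of absolutely continuous measures $\mu_k$, where at stage $k$ one smooths out mass near the ``hyperbola'' $\|q_kx_1\|\|q_kx_2\|<\psi(q_k)$ for a rapidly growing sequence $q_k\in Q$, using a bump function in each variable and exploiting that the Fourier transform of a single such localized bump on $\|q x_1\|<t$ decays like $\min(1,(|\xi|t)^{-1})$ with a gain from the $q$-periodicity. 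The two-dimensional Fourier transform factorizes over the two coordinates on each elementary piece, and the crucial estimate is that summing $|\widehat{\mu_k}(\xi_1,\xi_2)|$ over the arithmetic progressions (indices $a,b \bmod q_k$) produces, via a Gauss-sum / Erd\H{o}s--Vaaler type bound, a saving of $q_k^{-s}\psi(q_k)^{s/2}$ up to logarithmic factors; the convergence condition $\sum_q q^{-s}\psi(q)^{s/2}\cdot(\log\tfrac1{\psi(q)})^{O(1)}<\infty$ (which holds for $s<\tau$) lets one telescope these estimates across stages and control $|\widehat\mu(\xi)|$ uniformly in $\xi$.

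\textbf{The main obstacle} is the lower bound, specifically obtaining the power-decay of $\widehat\mu$ uniformly over \emph{all} frequency vectors $(\xi_1,\xi_2)\in\mathbb R^2$, not just those aligned with a coordinate axis: when $|\xi_1|$ and $|\xi_2|$ are wildly different sizes one must balance the single-variable decay estimates against the hyperbolic (rather than rectangular) shape of the target set, and the decomposition of $\{\|qx_1\|\|qx_2\|<\psi(q)\}$ into $\log\frac1{\psi(q)}$ many rectangles of varying eccentricity is precisely what forces the $\log$ into Gallagher's condition and what must be shown \emph{not} to degrade the exponent $s$. I would handle this by a dyadic pigeonhole over the eccentricity parameter $t$, proving the bump-measure estimate on each rectangle separately with constants independent of $t$, and then summing — the hypothesis $\sum_{q\in Q}\psi(q)\log\frac1{\psi(q)}<\infty$ guarantees this sum of $\log\frac1{\psi(q)}$ contributions stays summable. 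A secondary technical point is verifying $\mu$ is genuinely supported on the lim-sup set (a Borel--Cantelli lower bound ensuring the mass placed at stage $k$ survives), which is standard once the $q_k$ grow fast enough.
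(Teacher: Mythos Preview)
Your upper-bound route has a genuine gap. You propose: assume $|\widehat\mu(\xi)|\ll(1+|\xi|)^{-s/2}$, deduce finite $s$-energy, hence positive $s$-dimensional Hausdorff content, and then contradict this by covering $A_q$ and showing the $s$-content of the limsup vanishes. But this chain can at best prove $\dim_{\rm F}M_2^\times(\psi,Q)\le\dim_{\rm H}M_2^\times(\psi,Q)$, which is automatic and, crucially, is \emph{not sharp here}: by Theorem~\ref{FH24-1} the Hausdorff dimension equals $1+\min\{d(\psi),1\}$, which is strictly larger than $2\tau(\psi,Q)$ (this gap is precisely the non-Salem conclusion). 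Concretely, your claimed covering estimate ``$s$-content $\ll q^{-s}\psi(q)^{s/2}\log\tfrac1{\psi(q)}$'' is not a valid Hausdorff-content bound: the eccentric rectangles in the dyadic decomposition of the hyperbola region must be further subdivided into cubes, and after doing so the resulting content sum is governed by the \emph{Hausdorff} exponent $d(\psi)$, not by $\tau(\psi,Q)$. The paper avoids energy/content altogether: it takes the dyadic cover $A_q\subset\bigcup_{j}R_{q,j}$, expands each indicator $\chi_{R_{q,j}}$ in a Fourier series, and writes $\mu(R_{q,j})=\sum_{\mathbf n}c_{q,j}(\mathbf n)\widehat\mu(-\mathbf n)$. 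The assumed decay $|\widehat\mu(\xi)|\ll|\xi|^{-s}$ is then used \emph{pointwise} inside the sum, together with the fact that $c_{q,j}(\mathbf n)$ vanishes unless $q\mid\mathbf n$, to obtain $\mu(A_q)\ll\psi(q)\log\tfrac1{\psi(q)}+q^{-(s-\varepsilon)}\psi(q)^{(s-\varepsilon)/2}$; Borel--Cantelli on $\sum_q\mu(A_q)$ then gives the contradiction. The exponent $\tau(\psi,Q)$ emerges from this direct $\mu$-measure estimate, not from any geometric covering.

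For the lower bound your plan is workable in principle but far more laborious than necessary, and the difficulty you flag as ``the main obstacle'' (handling frequencies $(\xi_1,\xi_2)$ of disparate sizes against the hyperbolic shape) is one the paper sidesteps completely. The paper simply observes that with $\Psi(q):=\psi(q)^{1/2}$ one has the inclusion
\[
S(\Psi,Q)=\Big\{(x_1,x_2):\|qx_1\|<\Psi(q),\ \|qx_2\|<\Psi(q)\text{ for i.m. }q\in Q\Big\}\subset M_2^\times(\psi,Q),
\]
and then quotes the Cai--Hambrook lower bound $\dim_{\rm F}S(\Psi,Q)\ge 2\lambda(\Psi,Q)$ for the simultaneous set. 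A one-line computation gives $\lambda(\Psi,Q)=\tau(\psi,Q)$, and the lower bound follows by monotonicity of Fourier dimension. No new Kaufman-type construction is needed.
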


Let us make the following remarks regarding Theorem \ref{TZ24}$\colon$
\begin{itemize}
\item We can extend our result to the inhomogeneous setting$\colon$
$$M_2^{\times}(\psi,Q, {\bf{y}}):=\left\{(x_1,x_2)\in [0,1]^{2}\colon \|qx_1-y_1\|\|qx_2-y_2\|<\psi(q) \text{ for infinitely many } q\in Q\right\}$$
where ${\bf{y}}=(y_1,y_2)\in [0,1]^{2}.$ The proof of Theorem \ref{TZ24} applies to this setting to show that   $\dim_{\rm F}M_2^{\times}(\psi,Q,{\bf{y}})=2\tau(\psi,Q)$.

\item The Fourier dimension formula in Theorem \ref{TZ24} does not hold when $\sum_{q}\psi(q)\log\frac{1}{\psi(q)}$ diverges.  A  counterexample is provided in Example 1.
\end{itemize}

\noindent\textbf{Example 1.} Consider the function
$\psi(q)=\frac{1}{q}$ for $q\in\N$. It is readily checked that {$\sum_{q\in \N}\psi(q)\log\frac{1}{\psi(q)}$ diverges} and $\tau(\psi,\N)=\frac{2}{3}.$  On the other hand, by Dirichlet's Theorem\footnote{For any $x\in \mathbb{R},$ there are infinitely many $q\in \mathbb{N}$ such that $\|qx\|< \frac{1}{q}$.},$$M_2^{\times}(\psi)=[0,1]^{2} .$$
Hence $\dim_{\rm F}M_2^{\times}(\psi)=2\not=2 \tau(\psi,\N).$
\medskip

A  direct corollary of Theorem  \ref{TZ24}  is the following.
\begin{cor}
For $\psi(q)=q^{-\tau}$ with $\tau>1,$ the set $M_2^{\times}(\psi)$ is non-Salem.
\end{cor}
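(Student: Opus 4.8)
The plan is to combine the Fourier dimension formula of Theorem~\ref{TZ24} (applied with $Q=\N$) with the Hausdorff dimension formula of Theorem~\ref{FH24-1} and to observe that, for $\psi(q)=q^{-\tau}$ with $\tau>1$, the two values are strictly different; since $M_2^{\times}(\psi)$ is a Borel $\limsup$-set, this shows it is non-Salem.

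First I would check the hypothesis of Theorem~\ref{TZ24}: with $\psi(q)=q^{-\tau}$ one has $\psi(q)\log\frac{1}{\psi(q)}=\tau q^{-\tau}\log q$, and $\sum_q \tau q^{-\tau}\log q<\infty$ exactly because $\tau>1$ (this is why the argument breaks at $\tau=1$, consistently with Example~1). Next I would compute $\tau(\psi,\N)$: for $s\in[0,1]$,
$$\sum_{q}q^{-s}\big(q^{-\tau}\big)^{s/2}=\sum_{q}q^{-s(1+\tau/2)},$$
which converges iff $s(1+\tau/2)>1$, i.e.\ $s>\frac{2}{\tau+2}$. As $\tau>1$ gives $\frac{2}{\tau+2}<\frac{2}{3}<1$, we get $\tau(\psi,\N)=\frac{2}{\tau+2}$, hence $\dim_{\rm F}M_2^{\times}(\psi)=\frac{4}{\tau+2}$. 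Then I would read off the Hausdorff dimension from Theorem~\ref{FH24-1}: here $d(\psi)=\inf\{s:\sum_q q\,(q^{-\tau}/q)^{s}<\infty\}=\inf\{s:\sum_q q^{1-s(\tau+1)}<\infty\}$, and the series converges iff $s(\tau+1)>2$, i.e.\ $s>\frac{2}{\tau+1}<1$, so $d(\psi)=\frac{2}{\tau+1}$ and $\dim_{\rm H}M_2^{\times}(\psi)=1+\frac{2}{\tau+1}=\frac{\tau+3}{\tau+1}$.

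Finally I would compare the two: the claim is $\frac{4}{\tau+2}<\frac{\tau+3}{\tau+1}$ for every $\tau>1$. Cross-multiplying by the positive quantity $(\tau+1)(\tau+2)$, this is equivalent to $4(\tau+1)<(\tau+3)(\tau+2)$, i.e.\ $0<\tau^{2}+\tau+2$, which is trivially true. Therefore $\dim_{\rm F}M_2^{\times}(\psi)<\dim_{\rm H}M_2^{\times}(\psi)$, so $M_2^{\times}(\psi)$ is not a Salem set. There is essentially no real obstacle in this argument beyond bookkeeping; the only point requiring care is the verification of the summability condition $\sum_q\psi(q)\log\frac1{\psi(q)}<\infty$ so that Theorem~\ref{TZ24} is legitimately available, and correctly identifying the two exponents. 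One may additionally record the size of the gap, $\dim_{\rm H}M_2^{\times}(\psi)-\dim_{\rm F}M_2^{\times}(\psi)=\frac{\tau^{2}+\tau+2}{(\tau+1)(\tau+2)}$, which is positive for all $\tau>1$ and tends to $1$ as $\tau\to\infty$.
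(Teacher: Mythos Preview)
Your proof is correct and follows essentially the same approach as the paper: apply Theorems~\ref{FH24-1} and~\ref{TZ24} to obtain $\dim_{\rm H}M_2^{\times}(\psi)=\frac{\tau+3}{\tau+1}$ and $\dim_{\rm F}M_2^{\times}(\psi)=\frac{4}{\tau+2}$, then verify the strict inequality. Your write-up is in fact more detailed than the paper's, explicitly checking the summability hypothesis and the computations of $\tau(\psi,\N)$ and $d(\psi)$ that the paper leaves implicit.
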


\begin{proof}
By Theorems \ref{FH24-1}, \ref{TZ24}, we deduce that
$$\dim_{\rm H}M_2^{\times}(\psi)=\frac{\tau+3}{\tau+1}, \ \dim_{\rm F}M_2^{\times}(\psi)=\frac{4}{\tau+2}.$$
It is readily check that $\frac{4}{\tau+2}<\frac{\tau+3}{\tau+1}$, and thus $M_2^{\times}(\psi)$ is non-Salem.
\end{proof}

\section{Proof of Theorem \ref{TZ1}}
 We construct a function $\psi$ satisfying
$\sum_{q\in \N}\psi(q)=\infty$ and
$\dim_{\rm F}W(\psi,\N)=0.$

Let  $\mathcal{P}$ denote the set of all prime numbers. The series $\sum_{q\in \mathcal{P}}\frac{1}{p}$ diverges.
Now we define the desired function $\psi$ inductively.

\noindent Level 1$\colon$ Choose prime numbers $p_1^{(1)}<p_2^{(1)}<\ldots<p_{M_1}^{(1)}$ satisfying
\begin{equation}\label{sum1}
 \frac{1}{p_1^{(1)}}+\cdots+\frac{1}{p_{M_1}^{(1)}}>2.
\end{equation}
 Set $\mathcal{P}_1=\{p_1^{(1)},\ldots, p_{M_1}^{(1)}\}$ and let  $N^{(1)}=\prod_{i=1}^{M_1}p_i^{(1)}.$ We define
\begin{equation*}
\psi_1(q)=\left\{
    \begin{array}{ll}
      \frac{q}{2N^{(1)}}, & \ \ \ \text{if }~ q| N^{(1)}; \\
      ~&
       \\
      0, & \ \ \ \text{otherwise.}
    \end{array}
  \right.
\end{equation*}
By (\ref{sum1}), $\sum_{q\in \mathcal{P}_1}\psi_1(\frac{N^{(1)}}{q})>1$, and thus   $\sum_{q}\psi_1(q)>1$. Moreover if there is some $q$ satisfying $\|qx\|<\psi_1(q)$, we have that $\|N^{(1)}x\|<2^{-1}.$

\medskip

\noindent Level $k(\ge2)\colon$ Having defined $\mathcal{P}_{k-1}$, $N^{(k-1)}$ and $\psi_{k-1}$, we choose $\mathcal{P}_{k}$ as follows.

We take $\mathcal{P}_k=\{p_1^{(k)},\ldots, p_{M_k}^{(k)}\}$ consisting of prime numbers with
\begin{align*}
&p_{i}^{(k)}>N^{(k-1)} \text{ for } i=1,\ldots, M_k,\\
&\frac{1}{p_1^{(k)}}+\cdots+\frac{1}{p_{M_k}^{(k)}}>2^{k}.
\end{align*}
Putting  $N^{(k)}=\prod_{i=1}^{M_k}p_i^{(k)}$, we define
\begin{equation*}
\psi_k(q)=\left\{
    \begin{array}{ll}
      \frac{q}{2^{k}N^{(k)}}, & \ \ \ \text{if }~ q| N^{(k)}; \\
      ~&
       \\
      0, & \ \ \ \text{otherwise.}
    \end{array}
  \right.
\end{equation*}
Similarly, it is readily check that $\sum_{q}\psi_k(q)>1$ and
\begin{equation}\label{transf}
  \|qx\|<\psi_k(q) \text{ for some }q \Longrightarrow \|N^{(k)}x\|<2^{-k}.
\end{equation}

\medskip

Now we define the desired function $\psi\colon \N\to \mathbb{R}$ by $$\psi(q)=\sum\limits_{k=1}^{\infty}\psi_{k}(q).$$
Remark that each $\psi_k$ has a finite support, and these supports are pairwise disjoint, therefore the summation above contains at most one non-zero term. And $\psi$ has the following properties$\colon$
\medskip

(i) $\sum_{q\in\N}\psi(q)
\ge \sum_{k}\sum_{q}\psi_{k}(q)=\infty.$
\medskip

(ii) If $x\in W(\psi,\N)$, there are infinitely many $k$ such that
 $\|qx\|<\psi_k(q)$  for some $q$, and thus, by (\ref{transf}), $ \|N^{(k)}x\|<2^{-k}$. Hence
 $$W(\psi,\N)\subset\big\{x\in[0,1)\colon \|N^{(k)}x\|<2^{-k} \text{ for infinitely many } k\in \N\big\}=:E.$$
  Since $\sum_k2^{-k}<\infty$ and $\sum_k(\frac{2^{-k}}{N^{(k)}})^{s}<\infty$ for any $s>0$, we apply
 Theorem \ref{CH24} to obtain  $\dim_{\rm F}E=0$, and thus $\dim_{\rm F}W(\psi,\N)=0$ as desired.

\section{Proof of Theorem \ref{TZ24}}

We divide the proof of Theorem \ref{TZ24} into the following two propositions.

\begin{proposition}\label{up}
If $\sum_{q\in {Q}}\psi(q)\log\frac{1}{\psi(q)}<\infty,$ then $\dim_{\rm F}M_2^{\times}(\psi,{Q})\le 2\tau(\psi,{Q}).$
\end{proposition}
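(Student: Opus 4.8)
# Proof Proposal for Proposition \ref{up}

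The plan is to argue by contradiction, via a Borel--Cantelli estimate of the kind used by Cai--Hambrook (Theorem~\ref{CH24}), adapted to the hyperbolic geometry of the multiplicative problem. Write $E=M_2^{\times}(\psi,{Q})$ and, for each $q\in {Q}$ with $\psi(q)>0$, put
$$A_q=\big\{(x_1,x_2)\in[0,1]^2\colon \|qx_1\|\,\|qx_2\|<\psi(q)\big\},$$
so that $E\subseteq\bigcup_{q\in {Q},\,q\ge N}A_q$ for every $N$. Suppose, for contradiction, that there is $\mu\in\mathcal{M}(E)$ with $|\widehat{\mu}(\xi)|\ll(1+|\xi|)^{-\beta}$ for some $\beta>\tau(\psi,{Q})$. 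Since the same decay persists with any smaller exponent, and since we may assume $\tau(\psi,{Q})<1$ (otherwise $2\tau(\psi,{Q})=2\ge\dim_{\rm F}E$ trivially), we may take $\tau(\psi,{Q})<\beta<1$. Then $1=\mu(E)\le\sum_{q\in {Q},\,q\ge N}\mu(A_q)$ for all $N$, so it suffices to prove $\sum_{q\in {Q}}\mu(A_q)<\infty$; this contradiction yields $\dim_{\rm F}E\le 2\tau(\psi,{Q})$.

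The core estimate I would establish is
$$\mu(A_q)\ \ll\ \psi(q)\log\tfrac1{\psi(q)}\ +\ q^{-\beta}\psi(q)^{\beta/2}\log\tfrac1{\psi(q)}.\qquad(\star)$$
To prove $(\star)$, fix the classical nonnegative Fej\'er-type majorant: for $\delta\in(0,\tfrac12]$ there is $u_\delta\colon\mathbb{T}\to[0,\infty)$ with $\widehat{u_\delta}$ supported in $[-C\delta^{-1},C\delta^{-1}]$, with $\widehat{u_\delta}(0)\asymp\delta$ and hence $|\widehat{u_\delta}(n)|\le\widehat{u_\delta}(0)\ll\delta$ for every $n$, and with $u_\delta\ge\mathbf{1}_{\{\|\cdot\|\le\delta\}}$ on $\mathbb{T}$. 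Next cover $A_q$ by $O\big(\log\tfrac1{\psi(q)}\big)$ product boxes: using $A_q\subseteq\{\|qx_1\|\le\sqrt{\psi(q)}\}\cup\{\|qx_2\|\le\sqrt{\psi(q)}\}$ and, on the first set, a dyadic decomposition $2^{-j-1}<\|qx_1\|\le 2^{-j}$ forcing $\|qx_2\|<2^{j+1}\psi(q)$ (truncated at $\tfrac12$), together with the symmetric family, one reduces to estimating $\mu\big(P(\delta_1,\delta_2)\big)$ where $P(\delta_1,\delta_2)=\{\|qx_1\|\le\delta_1,\ \|qx_2\|\le\delta_2\}$ and $\delta_1\delta_2\asymp\psi(q)$. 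For such a box, $(x_1,x_2)\mapsto u_{\delta_1}(qx_1)\,u_{\delta_2}(qx_2)$ majorizes $\mathbf{1}_{P(\delta_1,\delta_2)}$, so expanding it into a double Fourier series and integrating against $\mu$ gives
$$\mu\big(P(\delta_1,\delta_2)\big)\le\sum_{m,n}\widehat{u_{\delta_1}}(m)\,\widehat{u_{\delta_2}}(n)\,\widehat{\mu}(mq,nq).$$
The term $(m,n)=(0,0)$ contributes $\asymp\delta_1\delta_2$; bounding $|\widehat{\mu}(mq,nq)|\ll(q\max(|m|,|n|))^{-\beta}$ and summing the remaining partial sums (all geometric because $0<\beta<1$) leads to
$$\mu\big(P(\delta_1,\delta_2)\big)\ \ll\ \delta_1\delta_2\ +\ q^{-\beta}\Big((\delta_1\delta_2)^{\beta/2}+\delta_1^{\beta}\delta_2+\delta_1\delta_2^{\beta}\Big).$$
Summing over the $O(\log\tfrac1{\psi(q)})$ boxes in both families—using $\delta_1\delta_2\asymp\psi(q)$, and that the sums $\sum_j 2^{j(1-\beta)}$ and $\sum_j 2^{-j(1-\beta)}$ are controlled by their extreme terms—collapses everything to $(\star)$.

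Granting $(\star)$, the first term sums over $q\in {Q}$ by the standing hypothesis $\sum_{q\in {Q}}\psi(q)\log\tfrac1{\psi(q)}<\infty$. For the second, pick $\beta'\in(\tau(\psi,{Q}),\beta)$; then $\sum_{q\in {Q}}q^{-\beta'}\psi(q)^{\beta'/2}<\infty$ by definition of $\tau(\psi,{Q})$, and since $\psi(q)<\tfrac14$ the factor $q^{-(\beta-\beta')}\psi(q)^{(\beta-\beta')/2}\log\tfrac1{\psi(q)}$ is uniformly bounded, whence $\sum_{q\in {Q}}q^{-\beta}\psi(q)^{\beta/2}\log\tfrac1{\psi(q)}\ll\sum_{q\in {Q}}q^{-\beta'}\psi(q)^{\beta'/2}<\infty$. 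Thus $\sum_{q\in {Q}}\mu(A_q)<\infty$, the desired contradiction.

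The main obstacle is the bookkeeping in the dyadic decomposition of $A_q$: one must verify that the product boxes genuinely cover $A_q$—including the degenerate region where one normalized coordinate is $\lesssim\psi(q)$ and the other is essentially unconstrained—and, more delicately, that summing the per-box bounds reproduces exactly the Lebesgue-order main term $\psi(q)\log\tfrac1{\psi(q)}$ (this is precisely why the Gallagher-type convergence hypothesis is the right one) together with a genuinely smaller, $\tau$-controlled error. A secondary technical point is the construction and the uniform $\widehat{u_\delta}$-bounds of the majorant, which are classical but must be made explicit; and, unlike the linear case of Theorem~\ref{CH24}, the extra logarithmic factors appear both in the hypothesis and throughout the estimates.
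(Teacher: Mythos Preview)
Your proposal is correct and follows essentially the same strategy as the paper: contradiction via Borel--Cantelli, a dyadic covering of $A_q$ by $O(\log(1/\psi(q)))$ product rectangles, and Fourier-analytic estimation of the $\mu$-measure of each rectangle using the decay $|\widehat{\mu}(\xi)|\ll|\xi|^{-s}$. The only cosmetic differences are that the paper works directly with the Fourier series of the indicator $\mathcal{X}_{R_{q,j}}$ rather than a Fej\'er-type majorant, and carries out the double sum over rectangles and frequencies in one combined case analysis (its Lemma~\ref{muAq}) rather than first isolating a per-box bound; the resulting estimate matches your $(\star)$, with your extra logarithm on the error term absorbed by your $\beta'<\beta$ trick exactly as the paper does via an $\varepsilon$-shift.
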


\begin{proposition}\label{low}
$\dim_{\rm F}M_2^{\times}(\psi,{Q})\ge 2\tau(\psi,{Q}).$
\end{proposition}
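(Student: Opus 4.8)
The plan is to construct, for each $s < \tau(\psi,Q)$, a probability measure $\mu \in \mathcal{M}(M_2^{\times}(\psi,Q))$ whose Fourier transform satisfies $|\widehat{\mu}(\xi)| \ll (1+|\xi|)^{-s}$, which gives $\dim_{\rm F} M_2^{\times}(\psi,Q) \ge 2s$, and then let $s \uparrow \tau(\psi,Q)$. The natural approach, following Kaufman's construction for $W(\psi)$ and the refinements of Cai--Hambrook, is probabilistic: place, for each $q$ in a suitably chosen sparse subsequence $q_1 < q_2 < \cdots$ of $Q$, a random measure adapted to the union of the hyperbola-neighbourhoods $\{(x_1,x_2): \|qx_1\|\,\|qx_2\| < \psi(q)\}$, and combine these across levels so that the limiting measure is supported on points lying in infinitely many of these sets. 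Because $s < \tau(\psi,Q)$ means $\sum_{q\in Q} q^{-s}\psi(q)^{s/2} = \infty$, one has enough "mass" at each scale to build a Frostman-type measure with the right dimension exponent.

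First I would set up the building blocks at a single level $q$. The set $\{(x_1,x_2): \|qx_1\|\|qx_2\| < \psi(q)\}$ is, modulo $\frac1q\mathbb{Z}^2$, a union of $q^2$ translated copies of the hyperbolic neighbourhood $\{|u||v| < \psi(q)\} \cap [-\frac1{2q},\frac1{2q}]^2$; its two-dimensional Lebesgue measure is $\asymp q^{-2}\psi(q)\log\frac{1}{\psi(q)}$ — this is exactly where the convergence hypothesis $\sum \psi(q)\log\frac1{\psi(q)} < \infty$ enters (it keeps the total mass finite so a Borel--Cantelli / limiting-measure argument is not obstructed, and it is consistent with the appearance of $\psi(q)^{s/2}$ rather than $\psi(q)^s$ in $\tau$). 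I would smooth this set into a bump function $f_q$ with $\int f_q = 1$, supported on a slight thickening, and record two facts: a pointwise bound on $\widehat{f_q}(\xi)$ that decays in $|\xi|$ once $|\xi| \gtrsim q$ (coming from the $\frac1q$-spacing of the copies in the $x_1$ and $x_2$ directions), and control of $\widehat{f_q}$ for small and intermediate $|\xi|$. The hyperbolic (rather than box) geometry makes $\widehat{f_q}$ genuinely two-dimensional and is the source of the $\log$ factor; I expect to need a careful stationary-phase or direct estimate on the Fourier transform of $\mathbf{1}_{\{|u||v|<t\}}$, which is the technical heart of the single-level estimate.

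Next I would iterate: choose $q_\ell$ growing fast enough (lacunary, and with $q_{\ell+1}$ divisible-compatible or at least far larger than $q_\ell$) that the Fourier supports essentially separate by frequency octaves, define $\mu$ as a weak-$*$ limit of $\mu_L = $ (suitable convolution/product of the $f_{q_\ell}$'s, normalized), and verify (a) $\mu$ is a probability measure supported on $M_2^{\times}(\psi,Q)$ — this uses that a.e. point in $\supp\mu$ lies in infinitely many of the level-$q_\ell$ sets, which follows from the construction forcing nested membership — and (b) the Fourier decay: for a given frequency $\xi$, only the level $\ell$ with $q_\ell \asymp |\xi|$ (and a bounded number of neighbours) contributes non-trivially, and the single-level bound there yields $|\widehat{\mu}(\xi)| \ll |\xi|^{-s}$ provided the weights/multiplicities at level $\ell$ were chosen using the divergence of $\sum q^{-s}\psi(q)^{s/2}$ to guarantee there are enough admissible $q$'s near each scale. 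The main obstacle, as in all such constructions, is step (b) combined with keeping $\mu$ genuinely supported on the liminf set: one must balance the lacunarity (needed for the frequencies to decouple and for the decay estimate to be clean) against the requirement that the chosen $q_\ell$ still capture enough of the mass of $\psi$ so that $s$ can be pushed all the way up to $\tau(\psi,Q)$. Concretely, I anticipate the delicate point is choosing the sparse subsequence and the per-level multiplicities so that $\sum_\ell (\text{contribution}) $ both converges in the probabilistic sense (so the limit measure exists and is non-degenerate) and still realizes the exponent $s$ arbitrarily close to $\tau(\psi,Q)$; this is a quantitative bookkeeping argument that mirrors, but is somewhat more involved than, the one-dimensional case in Theorem \ref{CH24}.
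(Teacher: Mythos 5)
Your proposal outlines a from-scratch Kaufman/Cai--Hambrook-type random construction adapted to the hyperbolic neighbourhoods $\{\|qx_1\|\|qx_2\|<\psi(q)\}$. The paper does something much simpler, and the key observation it uses is absent from your plan: if $\Psi(q)=\psi(q)^{1/2}$, then the ``corner box'' set
$$S(\Psi,Q)=\left\{(x_1,x_2)\colon \|qx_1\|<\Psi(q),\ \|qx_2\|<\Psi(q) \text{ for infinitely many } q\in Q\right\}$$
is contained in $M_2^{\times}(\psi,Q)$, and $\lambda(\Psi,Q)=\inf\{s\colon\sum_{q\in Q}(\Psi(q)/q)^{s}<\infty\}$ equals exactly $\tau(\psi,Q)$ because $(\psi(q)^{1/2}/q)^{s}=q^{-s}\psi(q)^{s/2}$. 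The lower bound $\dim_{\rm F}S(\Psi,Q)\ge 2\lambda(\Psi,Q)$ is already available (Cai--Hambrook, Proposition 1.4.4), so the entire proposition follows from monotonicity of Fourier dimension under inclusion. In other words, the squares where $\|qx_1\|\asymp\|qx_2\|\asymp\sqrt{\psi(q)}$ already realize the full exponent; no measure needs to be built on the hyperbolic regions themselves.

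As written, your argument has a genuine gap: everything that is hard is deferred (``I expect to need a careful stationary-phase or direct estimate,'' ``I anticipate the delicate point is\dots''), namely the Fourier transform of the thickened hyperbola at a single level, the frequency decoupling across levels, and the choice of the sparse subsequence realizing $s$ arbitrarily close to $\tau(\psi,Q)$ --- none of this is carried out, and it is precisely the content of the proof. Two further points are off. First, you invoke the hypothesis $\sum_q\psi(q)\log\frac{1}{\psi(q)}<\infty$ as what makes the lower-bound construction work; in fact Proposition \ref{low} holds with no hypothesis at all (that convergence condition is used only for the upper bound in Proposition \ref{up}), and for a lower bound one wants divergence of $\sum_{q\in Q}q^{-s}\psi(q)^{s/2}$, which you do state correctly elsewhere. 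Second, the logarithmic factor in the Lebesgue measure of $A_q$, which your plan treats as ``the source'' of the exponent, is actually irrelevant to the lower bound: discarding the long thin arms of the hyperbolic regions and keeping only the central squares loses the log but not the Fourier dimension. If you want to salvage your approach, the cleanest fix is to make this reduction explicit and then quote (or reprove) the simultaneous-approximation lower bound rather than constructing a measure on the hyperbolae directly.
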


\subsection{Proof of Proposition \ref{up}}
We'll proceed   by contradiction.  To this end, we assume $\dim_{\rm F}M_2^{\times}(\psi,{Q})=2s>2\tau(\psi,{Q})$, and thus  there exists a Borel probability measure $\mu$  which gives full measure to $M_2^{\times}(\psi,{Q})$ and whose Fourier transform satisfies that
$$|\widehat{\mu}(\xi)|\ll|\xi|^{-s}\quad \text{ for }|\xi|\ge1.$$
Since $s>\tau(\psi,{Q})$, for $0<\varepsilon<s-\tau(\psi,{Q}),$ we have that
$$\sum_{q\in {Q}}q^{-(s-\varepsilon)}(\psi(q))^{\frac{s-\varepsilon}{2}}<\infty.$$

We  will reach  a contradiction by showing that $\mu(M_2^{\times}(\psi,{Q}))=0$, which is achieved by using the limit-superior structure of $M_2^{\times}(\psi,{Q})$ and applying the first Borel-Cantelli lemma.

We start with the limit-superior structure of $M_2^{\times}(\psi,{Q})$:
\begin{align*}
M_2^{\times}(\psi,{Q})&=\big\{(x_1,x_2)\in[0,1]^{2}\colon \|qx_1\|\|qx_2\|<\psi(q) \text{ for infinitely many } q\in {Q}\big\}\\
                                                        &= \bigcap_{N=1}^{\infty}\bigcup_{ {N\le q\in Q}}A_q,
\end{align*}
where $A_q:=\big\{(x_1,x_2)\in [0,1]^{2}\colon \|qx_1\|\|qx_2\|<\psi(q) \big\}$ consists of  $q^2$  ``star-shaped'' domains with   centers at the rational points $(\frac{a}{q},\frac{b}{q})$.

\medskip

%

We next estimate the $\mu$-measure of $A_q$. To do this, we cover $A_q$ by rectangles and use  Fourier analysis. Define
$$R_{q,j}:=\left\{(x_1,x_2)\in[0,1]^{2}\colon \|qx_1\|<q\cdot 2^{-(j-1)}, \|qx_2\|<\frac{\psi(q)}{q\cdot 2^{-j}}\right\},$$
and $\mathcal{I}_q:=\{j\in\N\colon  2q\le 2^j\le \frac{q}{\psi(q)}\}.$
We claim that
\begin{equation}\label{inclu}
  A_q\subset \bigcup_{j\in \mathcal{I}_q}R_{q,j}.
\end{equation}
 In fact, if $(x_1,x_2)\in A_q$, then $q\cdot 2^{-j}\le \|qx_1\|<q\cdot 2^{-(j-1)}$ for some $j\in\N$, and thus
 $$\|qx_2\|<\frac{\psi(q)}{\|qx_1\|}\le \frac{\psi(q)}{q\cdot 2^{-j}}.$$
 We obtain that $x\in R_{q,j}$.
 Noting that $\|x\|\le\frac12$, we deduce that such $j$ satisfies
 $q\cdot 2^{-j}\le\frac12$, and thus $j\ge j_0=\lfloor\log_2 2q\rfloor$. On the other hand, we readily check that $\frac{\psi(q)}{q\cdot 2^{-j}}\ge 1$ if $j\ge j_1=\lfloor\log_2 \frac q{\psi(q)}\rfloor$, and thus $\|qx_2\|<\frac{\psi(q)}{q\cdot 2^{-j}}$ holds trivially. Hence
 $R_{q,j}\subset R_{q,j_1}$ for $j\ge  j_1$.

 \medskip

 We evaluate the Lebesgue measure of $A_q$ by further decomposing it into rectangles
 $$R_{q,j}(a,b)=\left\{(x_1,x_2)\in[0,1]^{2}\colon \left|x_1-\frac{a}{q}\right|<2^{-(j-1)},
\left|x_2-\frac{b}{q}\right|< \frac{\psi(q)}{q^{2}\cdot 2^{-j}}\right\},$$
where $(a,b)\in \{0,1,\ldots,q-1\}^2$.
Since $\mathcal{L}\big(R_{q,{j}}(a,b)\big)\asymp \frac{\psi(q)}{q^{2}}$, we have
$$\mathcal{L}\left(R_{q,{j}}\right)\asymp \psi(q).$$

We write  $\mathcal{X}_{R_{q,j}}$ as the indicator function of $R_{q,j},$
and extend it as a periodic function with respect to the lattice $\mathbb{Z}^{2}.$
As is customary, we write $e(x)=\exp(2\pi ix), $ and for ${\bf{x}}=(x_1,x_2), {\bf{n}}=(n_1,n_2),$ we write ${\bf{n}}\cdot {\bf{x}}=n_1x_1+n_2x_2$. Then $\mathcal{X}_{R_{q,j}}$
has the Fourier series
$$\mathcal{X}_{R_{q,j}}({\bf{x}})=\sum_{{\bf{n}}\in \mathbb{Z}^{2}}c_{q,j}({\bf{n}}) e({\bf{n}}\cdot {\bf{x}}),$$
where
$$c_{q,j}({\bf{n}})=\iint_{[0,1]^{2}}\mathcal{X}_{R_{q,j}}(\x)e(-\n\cdot\x)\d \x.$$
Hence we have
$$\mu(R_{q,j})=\iint_{[0,1]^{2}} \mathcal{X}_{R_{q,j}}({\bf{x}}) \d\mu(\x)=\sum_{{\bf{n}}\in \mathbb{Z}^{2}}c_{q,j}({\bf{n}}) \iint_{[0,1]^{2}}e(\nx)\d\mu
=\sum_{{\bf{n}}\in \mathbb{Z}^{2}}c_{q,j}({\bf{n}})\widehat{\mu}({-\bf{n}}).$$
It follows from (\ref{inclu}) that
$$\mu(A_q)\le \sum_{j\in\mathcal{I}_q}\mu(R_{q,j})=\sum_{j\in\mathcal{I}_q}\sum_{{\bf{n}}\in \mathbb{Z}^{2}}c_{q,j}({\bf{n}})\widehat{\mu}({-\bf{n}}).$$

\medskip
Before   further estimating, we make some remarks on the Fourier coefficient $c_{q,j}(\n)$. Since  $\mathcal{X}_{R_{q,j}}$ is indeed a periodic function with respect to the lattice $\frac1q\cdot\mathbb{Z}^{2},$ $c_{q,j}(\n)$ vanishes if
 $q\!\nmid\! \n$  (that is, either $q\!\nmid\!  n_1$ or $q\!\nmid\!  n_2$).  On the other hand,  when $q|{\bf{n}},$  the periodicity yields that $$c_{q,j}({\bf{n}})=q^{2}\iint_{R_{q,j}(1,1)}e(-\nx)\d \x=q^{2}\int_{-2^{-(j-1)}}^{2^{-(j-1)}}e(-n_1x_1)\d x_1 \int_{-\frac{\psi(q)}{q^{2}2^{-j}}}^{\frac{\psi(q)}{q^{2}2^{-j}}}e(-n_2x_2)\d x_2.$$
 Using the trivial inequality
 $$\int_{-\eta}^{\eta}e(nx)\ll \min\big\{\frac1{|n|},\eta\big\}\quad
\text{(with   $\min\big\{\frac10,\eta \big\}=\eta$ by convention),}$$
we deduce that
$$c_{q,j}({\bf{n}})\ll q^{2}\min\big\{\frac{1}{|n_1|}, 2^{-j}\big\}\min\big\{\frac{1}{|n_2|},\frac{\psi(q)}{q^{2}2^{-j}}\big\}.$$
Recalling that  $|\widehat{\mu}(\xi)|\ll|\xi|^{-s}$, we deduce that
\begin{align*}
\mu(A_q)\le&\sum_{j\in\mathcal{I}_q}\sum_{{\bf{n}}\in \mathbb{Z}^{2}}c_{q,j}({\bf{n}})\widehat{\mu}({-\bf{n}})= \sum_{j\in\mathcal{I}_q}\sum_{{\bf{k}}\in \mathbb{Z}^{2}}c_{q,j}({q\bf{k}})\widehat{\mu}({-q\bf{k}})   \\
  \ll & \sum_{j\in\mathcal{I}_q}\sum_{{\bf{k}}\in \mathbb{Z}^{2}} \min\big\{\frac{1}{|k_1|}, q  2^{-j}\big\}\min\big\{\frac{1}{|k_2|},\frac{ \psi(q)}{q 2^{-j}}\big\}\cdot \big(q\max\{|k_1|,|k_2|\}\big)^{-s}\\
 \ll   & \sum_{j\in\mathcal{I}_q}\sum_{{\bf{k}}\in \mathbb{\N}^{2}} \min\big\{\frac{1}{k_1}, q  2^{-j}\big\}\min\big\{\frac{1}{k_2},\frac{ \psi(q)}{q 2^{-j}}\big\}\cdot q^{-s}\min\big\{k_1^{-s},k_2^{-s}\big\}\\
 =& \sum_{j\in\mathcal{I}_q}\sum_{{\bf{k}}\in \mathbb{\N}^{2}} S(j,\k),
\end{align*}
where  $S(j,\k)=
\min\big\{\frac{1}{k_1}, q  2^{-j}\big\}\min\big\{\frac{1}{k_2},\frac{ \psi(q)}{q 2^{-j}}\big\}\cdot q^{-s}\min\big\{k_1^{-s},k_2^{-s}\big\}.$\footnote{When $\k=0$, $\widehat{\mu}(q\k)=1$, and $S(j,{\bf{0}})=\psi(q)$.}


%
%
 \begin{lem}\label{muAq}
For
 $\varepsilon>0$ we have $$ \mu(A_q)\ll \psi(q)\log\frac{1}{\psi(q)}+q^{-(s-\varepsilon)}(\psi(q))^{\frac{s-\varepsilon}{2}}.
$$
 \end{lem}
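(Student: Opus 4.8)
The plan is to bound the triple sum $\sum_{j\in\mathcal I_q}\sum_{\k\in\N^2}S(j,\k)$ by splitting the $\k$-range according to which term achieves each minimum. Write $\delta_1=q2^{-j}$ and $\delta_2=\frac{\psi(q)}{q2^{-j}}$, so that $\delta_1\delta_2=\psi(q)$ and, for $j\in\mathcal I_q$, both $\delta_1\le \tfrac12$ and $\delta_2\le 1$ while $\delta_1\ge \psi(q)/q$ hence $\delta_1^{-1}\le q/\psi(q)$; also $|\mathcal I_q|\ll \log\frac1{\psi(q)}$. For a fixed $j$ I would first dispose of the $\k=\mathbf 0$ term, which contributes exactly $\psi(q)$, and summed over $j\in\mathcal I_q$ this yields the $\psi(q)\log\frac1{\psi(q)}$ part of the bound. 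For $\k\ne\mathbf 0$ I would then decompose $\N^2$ into the four regions determined by $k_1\lessgtr \delta_1^{-1}$ and $k_2\lessgtr\delta_2^{-1}$, and in each region replace $\min\{k_1^{-s},k_2^{-s}\}$ by whichever is convenient (typically the variable that is "small", i.e. ranges over a short interval, so that the geometric-type sum converges with room to spare).

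The core of the estimate is the region where $k_1\le \delta_1^{-1}$ and $k_2\le \delta_2^{-1}$: there $S=\delta_1\delta_2\cdot q^{-s}\min\{k_1^{-s},k_2^{-s}\}=\psi(q)q^{-s}\min\{k_1^{-s},k_2^{-s}\}$. Summing $\min\{k_1^{-s},k_2^{-s}\}$ over $1\le k_1\le\delta_1^{-1}$, $1\le k_2\le\delta_2^{-1}$ and using $\delta_2\le\delta_1$ (valid once $\psi(q)\le q^2\cdot 2^{-2j}$, i.e. on the relevant part of $\mathcal I_q$; the complementary part is symmetric) one gets a bound of order $\delta_2^{-1}\sum_{k_1\le \delta_1^{-1}}k_1^{-s}\ll \delta_2^{-1}\max\{1,\delta_1^{-(1-s)}\}$ when $s<1$, which after multiplying by $\psi(q)q^{-s}$ and recalling $\delta_1=q2^{-j}$, $\delta_2^{-1}=q2^{-j}/\psi(q)$ becomes $\ll q^{1-s}2^{-j}(q2^{-j})^{-(1-s)} = q^{1-s}2^{-j}\cdot q^{-(1-s)}2^{j(1-s)}=2^{-js}$, summable in $j$; one has to track the powers of $q$ and $\psi(q)$ carefully and check they assemble into $q^{-(s-\varepsilon)}\psi(q)^{(s-\varepsilon)/2}$ after using the constraint $2q\le 2^j\le q/\psi(q)$ to trade $2^{-j}$ against $q^{-1}$ or $\psi(q)/q$. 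The three remaining regions (one or both of $k_1,k_2$ large) give, by the same bookkeeping, contributions that are no larger, since there the $\min\{1/k_i,\delta_i\}$ factors decay like $1/k_i$ and create extra convergence; the $\varepsilon$ is the slack needed so that the $s$-dependent geometric sums converge uniformly and so that the borderline exponents (e.g. $\sum k^{-s}$ with $s$ possibly close to $1$) can be absorbed.

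The main obstacle I anticipate is the combined sum over $j$ and $\k$ when the minima are "mixed" — i.e. making sure that after summing the $\k$-part for each $j$ one is left with something geometric in $j$ (decaying like $2^{-cj}$ for some $c>0$ depending on $s$ but not on $q$), so that $\sum_{j\in\mathcal I_q}$ contributes only a constant rather than another $\log\frac1{\psi(q)}$ factor in front of the $q^{-(s-\varepsilon)}\psi(q)^{(s-\varepsilon)/2}$ term. This forces one to use the range constraint defining $\mathcal I_q$ at exactly the right moment: the endpoints $2^j\asymp q$ and $2^j\asymp q/\psi(q)$ are precisely where the power of $2^{-j}$ converts into the clean power of $q$ and $\psi(q)$ respectively, and the interpolation between them is what produces the exponent $\tfrac{s}{2}$ on $\psi(q)$. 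Everything else is routine summation of geometric and $p$-series once the regions are set up, and the final step is simply to note $s-\varepsilon$ can be renamed so the statement holds for every $\varepsilon>0$.
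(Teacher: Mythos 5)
Your architecture (isolate $\k=\mathbf 0$ to get the $\psi(q)\log\frac1{\psi(q)}$ term, then split $\N^2$ by which factor achieves each $\min\{1/k_i,\delta_i\}$, with $\delta_1=q2^{-j}$, $\delta_2=\psi(q)/(q2^{-j})$) is essentially the paper's, but the one concrete estimate you carry out has a genuine gap that kills the result. In the core region $k_1\le\delta_1^{-1}$, $k_2\le\delta_2^{-1}$ you bound $\min\{k_1^{-s},k_2^{-s}\}\le k_1^{-s}$ and arrive at $\psi(q)q^{-s}\delta_2^{-1}\delta_1^{-(1-s)}=2^{-js}$ per $j$, hence $\ll q^{-s}$ after summing over $j\ge\log_2(2q)$. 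But $q^{-s}$ is \emph{not} $\ll q^{-(s-\varepsilon)}\psi(q)^{(s-\varepsilon)/2}$ (the latter is far smaller when $\psi(q)$ is small), and a bound of $q^{-s}$ is useless for the Borel--Cantelli step since $\sum_q q^{-s}$ diverges for $s\le1$. The loss comes precisely from replacing the min by the power of the ``small'' variable: discarding the decay in the long variable $k_2$ costs a full factor of $\psi(q)^{s/2}$, and no amount of ``trading $2^{-j}$ against $q^{-1}$ or $\psi(q)/q$'' can recover it, because your per-$j$ bound $2^{-js}$ is decreasing in $j$ and is already saturated at the bottom endpoint $2^j\asymp q$.

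What is actually needed (and what the paper does) is to keep $\min\{k_1^{-s},k_2^{-s}\}=\max\{k_1,k_2\}^{-s}$ by splitting $\Omega_3=\{k_1,k_2\ge1\}$ into $k_1\le k_2$ and $k_1>k_2$, and, crucially, to split $\mathcal I_q$ at $2^j\asymp q\psi(q)^{-1/2}$, i.e.\ where $\delta_1=\delta_2=\sqrt{\psi(q)}$. The correctly computed core contribution is of order $q^{-2}\psi(q)2^{j(2-s)}+q^{-2s}\psi(q)^{s}2^{js}\log\frac1{\psi(q)}$, which is \emph{increasing} in $j$; summing over $2^j\le q\psi(q)^{-1/2}$ it is dominated by that midpoint and evaluates to $q^{-s}\psi(q)^{s/2}$ (times the log, absorbed by $\psi(q)^{-\varepsilon}$ --- that is the actual role of $\varepsilon$, not uniform convergence of geometric series). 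So your anticipated difficulty is inverted: the $j$-sums in the dominant range are not decaying geometric series controlled by the endpoints $2^j\asymp q$ or $2^j\asymp q/\psi(q)$, but growing ones whose peak at the geometric mean $q\psi(q)^{-1/2}$ is the sole source of the exponent $\tfrac s2$ on $\psi(q)$. Without the $k_1\lessgtr k_2$ split and the $\mathcal I^1/\mathcal I^2$ split your proposal cannot produce the claimed bound.
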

\begin{proof}
 We need to bound the summation $\sum_{j\in\mathcal{I}_q}\sum_{{\bf{k}}\in \mathbb{\N}^{2}} S(j,\k)$.  In the following proof, we abbreviate $\mathcal{I}_q$ to $\mathcal{I}$. We remark that within the proof,  all the implied constants  in   Vinogradov's notation are independent of $q$ (while they may depend on $s$).

We first partition $\N^2$ into four subclasses, and deal with the summations over these subclasses separately.
\medskip

\noindent\underline{Case 1.}\quad $\Omega_0=\{\textbf{0}\}$.
 $$\sum_{j\in\mathcal{I}}S(j,\textbf{0})=\sum_{j\in\mathcal{I}}\psi(q)\asymp \psi(q)\log\frac{1}{\psi(q)}.$$

\medskip

\noindent\underline{Case 2.}\quad  $\Omega_1=\{\k=(k_1,k_2) \in\mathbb{N}^{2}\colon k_1=0, k_2\ge 1\}.$

\smallskip

We obtain
\begin{align*}
 \sum_{{\bf{k}}\in \Omega_1}S(j,\k)
 &\ll \sum_{k\ge 1} q 2^{-j}  \min\big\{\frac{1}{k}, \frac{\psi(q)}{q 2^{-j}}\big\} \cdot(qk)^{-s} \\
 & =\sum_{k\le \frac{q2^{-j}}{\psi(q)}} q 2^{-j}   \frac{\psi(q)}{q 2^{-j}} \cdot(qk)^{-s} +\sum_{k>\frac{q2^{-j}}{\psi(q)}}q 2^{-j}   \frac{1}{k} \cdot(qk)^{-s}\\
 &\ll q^{1-2s}(\psi(q))^{s}2^{-j(1-s)},
\end{align*}
where in the last inequality we use   the facts $\sum_{1\le k\le \xi}k^{-s}\ll \xi^{1-s}$ and $\sum_{k> \xi}k^{-(1+s)}\ll \xi^{-s}.$
Hence we have
$$ \sum_{j\in\mathcal{I}} \sum_{{\bf{k}}\in \Omega_1}S(j,\k)
\ll \sum_{j\ge \log 2q}q^{1-2s}(\psi(q))^{s}2^{-j(1-s)}\ll q^{-s}(\psi(q))^{s}.$$

\medskip

\noindent\underline{Case 3.}\quad  $\Omega_2=\{\k=(k_1,k_2) \in\mathbb{N}^{2}\colon k_1\ge1, k_2=0\}.$

  \smallskip

 Similarly to the case (1), we obtain
\begin{align*}
 \sum_{{\bf{k}}\in \Omega_2}S(j,\k)
 &
=\sum_{k\ge1}\min\big\{\frac{1}{k}, q  2^{-j}\big\} \frac{ \psi(q)}{q 2^{-j}} \cdot (qk)^{-s}
 \\& =\sum_{k\le\frac{2^{j}}{q}}  q  2^{-j} \frac{ \psi(q)}{q 2^{-j}} \cdot (qk)^{-s}
  +\sum_{k>\frac{2^{j}}{q}}   \frac{1}{k} \frac{ \psi(q)}{q 2^{-j}} \cdot (qk)^{-s}   \\
 &\ll q^{-1}\psi(q)2^{j(1-s)},
\end{align*}
and thus
$$ \sum_{j\in\mathcal{I}} \sum_{{\bf{k}}\in \Omega_2}S(j,\k)
\ll \sum_{1\le j\le \log\frac{2q}{\psi(q)}}q^{-1}(\psi(q))2^{j(1-s)}\ll q^{-s}(\psi(q))^{s}.$$

\medskip

\noindent\underline{Case 4.}\quad  $\Omega_3=\{\k=(k_1,k_2) \in\mathbb{N}^{2}\colon k_1\ge1, k_2\ge1\}.$

 \smallskip

 In this case, we divide   the summation $T:=\sum_{j\in\mathcal{I}} \sum_{{\bf{k}}\in \Omega_3}S(j,\k) $ into several parts by partition the domain of summation  $\mathcal{I}\times \Omega_3$.

We first divide   $\mathcal{I}=\{i\in\N\colon \log 2q\le i\le \log\frac{q}{\psi(q)}\}$  into two parts
$$
\mathcal{I}^1=\big\{i\in\N\colon ~\log 2q\le i< \log\frac{q}{\sqrt{\psi(q)}}\big\},\quad
\mathcal{I}^2=\big\{i\in\N\colon  \log\frac{q}{\sqrt{\psi(q)}}\le i\le \log\frac{q}{\psi(q)}\big\};
$$
 divide $\Omega_3$ into two parts
 $$\Omega_3^1=\big\{\k\in \Omega_3 \colon k_1\le k_2 \big\},\quad \Omega_3^2=\big\{\k\in \Omega_3 \colon k_1> k_2 \big\}.$$
 Remark that $ q  2^{-j}>\frac{ \psi(q)}{q 2^{-j}}$ if $j\in \mathcal{I}^1$, while  $ q  2^{-j}\le \frac{ \psi(q)}{q 2^{-j}}$ if $j\in \mathcal{I}^2$.
 In this way, we divide the summation $T$ into four parts $T^{uv}$  with $u,v\in\{1,2\}$, where  $T^{uv}$ is the summation of $S(j,\k)$ with $(j,\k)$ runing over  $\mathcal{I}^u\times \Omega_3^v$.

 \medskip

 Now we   estimate $T^{uv}$.

 \medskip

\noindent \underline{\text{Estimation of $T^{11}$}}.~ We have
$$T^{11}=\sum_{j\in\mathcal{I}^1} \sum_{k_2=1}^{\infty}\sum_{k_1=1}^{k_2}q^{-s}k_2^{-s}\min\big\{\frac{1}{k_1}, q2^{-j}\big\}\min\big\{\frac{1}{k_2},\frac{\psi(q)}{q2^{-j}}\big\}. $$

In the following, we use the following basic estimation$\colon$
$$ \min\big\{\frac{1}{k_1}, q2^{-j}\big\}=\begin{cases}
     q2^{-j} & \text{if $k_1\le \frac{2^{j}}{q}$}, \\
    ~~ \frac{1}{k_1} & \text{otherwise},
\end{cases}\quad
\min\big\{\frac{1}{k_2},\frac{\psi(q)}{q2^{-j}}\big\}=\begin{cases}
     \frac{\psi(q)}{q2^{-j}}& \text{if $k_2\le \frac{q2^{-j}}{\psi(q)}$},\\
     ~~\frac{1}{k_2} & \text{otherwise}.
     \end{cases}
     $$

  In order to remove the min-symbols from the inner summation
 $$t_j=\sum_{k_2=1}^{\infty}\sum_{k_1=1}^{k_2}q^{-s}k_2^{-s}\min\big\{\frac{1}{k_1}, q2^{-j}\big\}\min\big\{\frac{1}{k_2},\frac{\psi(q)}{q2^{-j}}\big\},$$
we further divide it into several parts$\colon$
\begin{align*}
t^1_j&:=\sum_{1\le  k_2 \le \frac{2^{j}}{q}}\sum_{k_1=1}^{k_2}q^{-s} k_2 ^{-s}\cdot q2^{-j}\cdot \frac{\psi(q)}{q2^{-j}},\\
t_j^{2}&:=\sum_{\frac{2^{j}}{q}<  k_2 \le \frac{q2^{-j}}{\psi(q)}}\sum_{k_1=1}^{k_2}q^{-s} k_2 ^{-s}\min\big\{\frac{1}{ k_1 }, q2^{-j}\big\}\cdot \frac{\psi(q)}{q2^{-j}},\\
t_j^{3}&:=\sum_{  k_2 > \frac{q2^{-j}}{\psi(q)}}\sum_{k_1=1}^{k_2}q^{-s} k_2 ^{-s}\min\big\{\frac{1}{ k_1 }, q2^{-j}\big\}\cdot \frac{1}{ k_2}.
\end{align*}
For $t^1_j$, we have
$$t^1_j=q^{-s}\psi(q)\sum_{1\le  k_2 \le \frac{2^{j}}{q}} k_2 ^{1-s}\asymp q^{-2}\psi(q)2^{j(2-s)}.$$
For $t^2_j$, we deduce
\begin{align*}
t^2_j&\le \sum_{\frac{2^{j}}{q}<  k_2 \le \frac{q2^{-j}}{\psi(q)}}\Big(\sum_{1\le  k_1 \le \frac{2^{j}}{q}}q^{-s}\psi(q) k_2 ^{-s}+\sum_{\frac{2^{j}}{q}\le  k_1 \le k_2}q^{-1-s}\psi(q)2^{j} k_2 ^{-s}\cdot { k_1 }^{-1}\Big)\\
&\ll \sum_{\frac{2^{j}}{q}<  k_2 \le \frac{q2^{-j}}{\psi(q)}}q^{-1-s}\psi(q)2^{j} k_2 ^{-s}\log  k_2
\ll 2^{js}q^{-2s}(\psi(q))^{s}\log\frac{1}{\psi(q)}.
\end{align*}
For $t^3_j$, we obtain
\begin{align*}
t^3_j&\le \sum_{  k_2 > \frac{q2^{-j}}{\psi(q)}}\Big(\sum_{1\le  k_1 \le \frac{2^{j}}{q}}q^{1-s} k_2 ^{-1-s}2^{-j}+\sum_{\frac{2^{j}}{q}\le  k_1 \le k_2}q^{-s} k_2 ^{-1-s}\cdot { k_1 }^{-1}\Big)\\
&\ll \sum_{  k_2 > \frac{q2^{-j}}{\psi(q)}}q^{-s} k_2 ^{-1-s}\log k_2
\ll 2^{js}q^{-2s}(\psi(q))^{s}\log\frac{1}{\psi(q)} .
\end{align*}

Substituting these into $t_j=t_j^{1}+t_j^{2}+t_j^{3}$ yields that
$$\sum_{j\in \mathcal{I}^1}t_j \ll \sum_{j\in \mathcal{I}^1}\big(q^{-2}\psi(q)2^{j(2-s)}+2^{js}q^{-2s}(\psi(q))^{s}\log\frac{1}{\psi(q)}\big)\ll q^{-(s-\varepsilon)}(\psi(q))^{\frac{s-\varepsilon}{2}},$$
where the last inequality is due to the fact that $(\frac{1}{\psi(q)})^{\varepsilon}>\log \frac{1}{\psi(q)}$ as $\frac{1}{\psi(q)}\to \infty.$

\medskip
\noindent \underline{\text{Estimation of $T^{12}$}}.
$$T^{12}=\sum_{j\in\mathcal{I}^1} \sum_{k_2=1}^{\infty}\sum_{k_1>k_2}q^{-s}k_2^{-s}\min\big\{\frac{1}{k_1}, q2^{-j}\big\}\min\big\{\frac{1}{k_2},\frac{\psi(q)}{q2^{-j}}\big\}. $$
Similarly,  we divide the inner summation $t_j$ into three parts and estimate as follows$\colon$
\begin{align*}
t_j^{1}&:=\sum_{1\le  k_2 \le \frac{2^{j}}{q}}\sum_{  k_1 \ge  k_2 }q^{-s} k_1 ^{-s}\cdot\min\big\{\frac{1}{ k_1}, q2^{-j}\big\}\cdot \frac{\psi(q)}{q2^{-j}}\\
&\le \sum_{1\le  k_2 \le \frac{2^{j}}{q}}\Big(\sum_{ k_2 \le  k_1 <\frac{2^{j}}{q}}q^{-s}\psi(q) k_1 ^{-s}+\sum_{ k_1 \ge \frac{2^{j}}{q}}q^{-1-s}2^{j}\psi(q)k_1^{-1-s}\Big)\\
&\ll \sum_{1\le  k_2 \le \frac{2^{j}}{q}}q^{-1}\psi(q)2^{j(1-s)}\le q^{-2}\psi(q)2^{j(2-s)};\\
t_j^{2}&:=\sum_{\frac{2^{j}}{q}<  k_2 \le \frac{q2^{-j}}{\psi(q)}}\sum_{ k_1 \ge  k_2 }q^{-s} k_1 ^{-s}\cdot \frac{1}{ k_1 }\cdot \frac{\psi(q)}{q2^{-j}}\\
&\ll \sum_{\frac{2^{j}}{q}<  k_2 \le \frac{q2^{-j}}{\psi(q)}}q^{-1-s}2^{j}\psi(q)|k_{2}|^{-s}\ll q^{-2s}(\psi(q))^{s}2^{js};\\
t_j^{3}&:=\sum_{  k_2 > \frac{q2^{-j}}{\psi(q)}}\sum_{ k_1 \ge  k_2 }q^{-s} k_1 ^{-s}\cdot \frac{1}{ k_1}\cdot \frac{1}{ k_2} \ll \sum_{  k_2 > \frac{q2^{-j}}{\psi(q)}}q^{-s} k_2 ^{-1-s}\asymp q^{-2s}(\psi(q))^{s}2^{js}.
\end{align*}

Combining these yields that
$$T^{12}=\sum_{j\in \mathcal{I}^{1}}t_j\ll \sum_{j\in \mathcal{I}^{1}}\big(q^{-2}\psi(q)2^{j(2-s)}+q^{-2s}(\psi(q))^{s}2^{js}\big)\ll q^{-s}(\psi(q))^{\frac{s}{2}}.$$


\medskip
\noindent \underline{\text{Estimation of $T^{21}+T^{22}$}}.~
 Similar arguments apply to these case:
\begin{align*}
T^{21}+T^{22}&\ll \sum_{j\in\mathcal{I}^2}\Big(\sum_{k_2=1}^{\infty}\sum_{ k_1=1}^{ k_2 }q^{-s} k_2 ^{-s}\min\big\{\frac{1}{ k_1 }, q2^{-j}\big\}\min\big\{\frac{1}{ k_2 },\frac{\psi(q)}{q2^{-j}}\big\}\\
& ~~~~~~~+ \sum_{k_2=1}^{\infty}\sum_{ k_1 \ge  k_2 }q^{-s} k_1 ^{-s}\min\big\{\frac{1}{ k_1}, q2^{-j}\big\}\min\big\{\frac{1}{ k_2},\frac{\psi(q)}{q2^{-j}}\big\}\Big)\\
& \ll q^{-(s-\varepsilon)}(\psi(q))^{\frac{s-\varepsilon}{2}}.
\end{align*}

\smallskip
\noindent To sum up,  we have
$$\sum_{j\in\mathcal{I}} \sum_{{\bf{k}}\in \Omega_3}S(j,\k) =T=\sum_{u,v}T^{uv} \ll  q^{-(s-\varepsilon)}(\psi(q))^{\frac{s-\varepsilon}{2}}.$$

\bigskip

Combining four cases, we obtain that
$$\sum_{j\in\mathcal{I}_q}\sum_{{\bf{k}}\in \mathbb{\N}^{2}} S(j,\k) \ll \psi(q)\log\frac{1}{\psi(q)}+q^{-(s-\varepsilon)}(\psi(q))^{\frac{s-\varepsilon}{2}},$$
which completes the proof of the lemma.
\end{proof}

Finally, by Lemma \ref{muAq} we have   that
$$\sum_{q\in {Q}}\mu(A_q)\le \sum_{q\in {Q}}\left(\psi(q)\log\frac{1}{\psi(q)}+q^{-(s-\varepsilon)}(\psi(q))^{\frac{s-\varepsilon}{2}}\right)<\infty.$$
We deduce by  the first Borel-Cantelli lemma that $\mu(M_{2}^{\times}(\psi,{Q}))=0$, the desired contradiction.

\subsection{Proof of Proposition \ref{low}}
Before proceeding, we cite a Fourier dimension result of the set
$${S(\Psi,Q):=\left\{(x_1,x_2)\in[0,1]^{2}\colon \|qx_1\|<\Psi(q), \|qx_2\|<\Psi(q) \text{ for infinitely many }q\in Q\right\}}.$$
\begin{lem}[\cite{CH24}, Proposition 1.4.4]
{Let $Q$ be an infinite subset of $\N$}.
Let $\Psi\colon \N\to [0,\frac{1}{2})$ be a positive function. Then
$$\dim_{\rm F}S(\Psi,{Q})\ge 2\lambda(\Psi,{Q}),$$
where $\lambda(\Psi,{Q})=\inf\left\{s\in[0,1]\colon \sum_{q\in {Q}}\big(\frac{\Psi(q)}{q}\big)^{s}<\infty\right\}.$
\end{lem}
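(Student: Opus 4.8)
The plan is to transplant the Kaufman-type construction behind the lower bound in Theorem~\ref{CH24} to the two-dimensional, ``same denominator'' situation. Since $\dim_{\rm F}$ is a supremum, it is enough to fix an arbitrary $\sigma\in(0,\lambda(\Psi,Q))$ and to produce a probability measure $\mu$ with $\supp\mu\subseteq S(\Psi,Q)$ and $\abs{\widehat\mu(\xi)}\ll_{\sigma,\sigma'}(1+\abs{\xi})^{-\sigma'}$ for every $\sigma'<\sigma$; letting first $\sigma'\uparrow\sigma$ and then $\sigma\uparrow\lambda(\Psi,Q)$ yields $\dim_{\rm F}S(\Psi,Q)\ge 2\lambda(\Psi,Q)$, the case $\lambda(\Psi,Q)=1$ being covered as well since $\dim_{\rm F}\le 2$ always. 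By the definition of $\lambda(\Psi,Q)$, the only input we shall use is the divergence $\sum_{q\in Q}(\Psi(q)/q)^{\sigma}=\infty$.

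First I would set up the building blocks. Fix a smooth $\chi\ge 0$ with $\supp\chi\subseteq(-1,1)$ and $\int_{\R}\chi=1$. For $q\in Q$ let $g_q\colon\T\to[0,\infty)$ be the $\tfrac1q$-periodic probability density obtained by placing at each point $j/q$ a copy of $\chi$ rescaled to width $\Psi(q)/q$; then $\supp g_q=\{x\colon\|qx\|<\Psi(q)\}$, the coefficient $\widehat{g_q}(n)$ vanishes unless $q\mid n$, and $\abs{\widehat{g_q}(qm)}=\abs{\widehat\chi(m\Psi(q))}$, so $\abs{\widehat{g_q}(qm)}\le 1$ for all $m$ while $\abs{\widehat{g_q}(qm)}\ll_A(1+\abs{m}\Psi(q))^{-A}$ for every $A>0$. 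The two-dimensional block is the tensor square $g_q\otimes g_q$, a probability density on $[0,1]^2$ supported exactly on $A_q=\{\,\|qx_1\|<\Psi(q),\ \|qx_2\|<\Psi(q)\,\}$, with $\widehat{g_q\otimes g_q}(n_1,n_2)=\widehat{g_q}(n_1)\widehat{g_q}(n_2)$ supported on $q\Z^2$. Using $\sum_{q\in Q}(\Psi(q)/q)^{\sigma}=\infty$, I would extract along a sequence of scales $Q_1\ll Q_2\ll\cdots$, growing as fast as needed, finite blocks $B_n\subseteq Q$ that are pairwise coprime (one may take primes), lie in a short window on which $\Psi$ is comparable to a single value, sit at denominators $\asymp Q_n$, and satisfy $\sum_{q\in B_n}(\Psi(q)/q)^{\sigma}\asymp 1$; this last normalisation forces $\#B_n\asymp\big(\sup_{q\in B_n}q/\Psi(q)\big)^{\sigma}$, i.e. it is exactly large enough to cover the whole range of frequencies on which the level-$n$ block is not already small. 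Set $F_n:=\frac1{\#B_n}\sum_{q\in B_n}g_q\otimes g_q$ and let $\mu$ be the weak-$*$ limit of $\d\mu_N:=c_N^{-1}\big(\prod_{n\le N}F_n\big)\d\x$, where $c_N:=\int_{[0,1]^2}\prod_{n\le N}F_n\,\d\x$. Granting $c_N\to c_\infty\in(0,\infty)$, the limit $\mu$ is a probability measure with $\supp\mu\subseteq\bigcap_n\bigcup_{q\in B_n}A_q\subseteq S(\Psi,Q)$, hence $\mu\in\mathcal M(S(\Psi,Q))$.

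Two points then remain. \emph{(i) Non-degeneracy:} $c_N\to c_\infty>0$. This is the standard ``a Cantor measure exists'' estimate: because $Q_{n+1}$ is chosen far larger than every scale appearing before it, $F_{n+1}$ oscillates on a scale much finer than the resolution of $\prod_{n'\le n}F_{n'}$ and so integrates against it to essentially its own mean $\int F_{n+1}=1$; one also uses that $B_n$ is a genuine block inside $Q$, so the pieces $A_q$ $(q\in B_n)$ do not crowd the density. I expect this to be routine once the growth schedule of $(Q_n)$ is fixed. \emph{(ii) The Fourier estimate.} Since $\mu_N$ is a product in physical space, $\widehat{\mu_N}$ is the iterated convolution of the $\widehat{F_n}$, and
$$
\widehat{F_n}(\xi_1,\xi_2)=\frac1{\#B_n}\sum_{\substack{q\in B_n\\ q\mid\xi_1,\ q\mid\xi_2}}\widehat{g_q}(\xi_1)\,\widehat{g_q}(\xi_2).
$$
Here $q\mid\xi_1$ and $q\mid\xi_2$ force $q\mid\gcd(\xi_1,\xi_2)$, and since $B_n$ is pairwise coprime at most $O(\log\abs{\xi})$ of its elements can do so; each surviving term has modulus $\le1$, with the rapid decay of $\widehat{g_q}$ taking over once $\abs{\xi}$ exceeds $q/\Psi(q)$. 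Thus $\abs{\widehat{F_n}(\xi)}\ll(\log\abs{\xi})/\#B_n$ on the plateau of level $n$, with a true power gain beyond it --- precisely the one-dimensional bound, the tensoring having cost nothing. Combining the levels (for $\abs{\xi}$ in the window attached to level $m$ one isolates the top frequency block, coming from $B_m$; the lower levels contribute harmless $O(1)$ factors and the coarser higher levels a factor that telescopes) gives $\abs{\widehat\mu(\xi)}\ll(\log\abs{\xi})/\#B_m$ throughout that window, and since $\#B_m\asymp\big(\sup_{q\in B_m}q/\Psi(q)\big)^{\sigma}$ dominates $\abs{\xi}^{\sigma}$ there, this is $\ll(1+\abs{\xi})^{-\sigma}\log\abs{\xi}\ll_{\sigma'}(1+\abs{\xi})^{-\sigma'}$ for any $\sigma'<\sigma$, as required.

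The main obstacle is item (ii): controlling $\widehat\mu(\xi)$ for \emph{all} $\xi$ simultaneously, including frequencies lying near multiples of denominators drawn from several different blocks, and ruling out resonances between levels --- this is the technical core of Kaufman's argument, which must be faithfully replicated. The genuinely new ingredients are mild: the tensor-square blocks $g_q\otimes g_q$, together with the observation that divisibility of a frequency pair is governed by $\gcd(\xi_1,\xi_2)$, so that pairwise coprimality of $B_n$ still leaves only $O(\log\abs{\xi})$ surviving terms and the one-dimensional estimate transfers essentially verbatim. The remaining cost, relative to the monotone case, is bookkeeping: when $\Psi$ is highly non-monotone or rapidly decaying one must choose the windows defining the $B_n$ with care --- here it is useful that $\lambda(\Psi,Q)>0$ by itself forbids $\Psi$ from being too small along the scales that matter --- so that enough pairwise coprime denominators are available to make $\#B_n$ as large as the normalisation $\sum_{q\in B_n}(\Psi(q)/q)^{\sigma}\asymp 1$ demands.
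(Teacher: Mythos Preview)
The paper does not give its own proof of this lemma: it is quoted verbatim from \cite{CH24} (Proposition~1.4.4) and used as a black box in the proof of Proposition~\ref{low}. So there is no argument in the paper to compare your proposal against.

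That said, your sketch is the right one --- it is precisely the Kaufman--Bluhm construction that \cite{CH24} carries out, tensored with itself, and your key observation that the divisibility constraint $q\mid\xi_1,\ q\mid\xi_2$ collapses to $q\mid\gcd(\xi_1,\xi_2)$ is exactly why the two-dimensional diagonal case costs nothing extra over one dimension. One point to tighten: you stipulate that the blocks $B_n$ consist of pairwise coprime integers (``one may take primes''), but for an arbitrary infinite $Q\subseteq\N$ this need not be possible --- think of $Q=\{2^k:k\ge1\}$. The fix is standard: drop the coprimality and replace your $O(\log|\xi|)$ divisor count by the divisor bound $d(n)\ll_\varepsilon n^{\varepsilon}$, which still gives $|\widehat{F_n}(\xi)|\ll_\varepsilon |\xi|^{\varepsilon}/\#B_n$ and hence $|\widehat\mu(\xi)|\ll_{\sigma'}(1+|\xi|)^{-\sigma'}$ for every $\sigma'<\sigma$. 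The remaining bookkeeping (selecting the dyadic windows so that $\Psi$ is nearly constant on each $B_n$, and scheduling $Q_n\to\infty$ fast enough to decouple the levels) is handled in \cite{CH24} exactly along the lines you indicate.
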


\bigskip
Putting  $\Psi(q)=(\psi(q))^{\frac{1}{2}}$, we readily check that
$\lambda(\Psi,Q)=\tau(\psi,Q)$. Moreover, we have $S(\Psi,{Q})\subset M_{2}^{\times}(\psi,Q)$, and  thus
$$\dim_{\rm F}M_{2}^{\times}(\psi,Q)\ge 2\tau(\psi,Q).$$

\subsection*{Acknowledgements}
This work was supported by NSFC Nos. 12171172, 12201476.

\author{Bo Tan}
{\footnotesize

School  of  Mathematics  and  Statistics

Huazhong  University  of Science  and  Technology, 430074 Wuhan, PR China

Email: \texttt{tanbo@hust.edu.cn}}
\vspace{5mm}

\author{Qing-Long Zhou}
{\footnotesize

School  of  Mathematics  and  Statistics

 Wuhan University of Technology, 430070 Wuhan, PR China

Email: \texttt{zhouql@whut.edu.cn}}

\end{document}